\newtheorem{theorem}{Theorem}
\numberwithin{theorem}{section}
\newtheorem{conjecture}{Conjecture}
\newtheorem{lemma}[theorem]{Lemma}
\newtheorem{proposition}[theorem]{Proposition}
\newtheorem{prop}[theorem]{Proposition}
\newtheorem{definition}[theorem]{Definition}
\theoremstyle{remark}
\newtheorem*{unremark}{Remark}
\newtheorem{example}[theorem]{Example}
\newcommand{\Z}{\mathbb{Z}}
\newcommand{\R}{\mathbb{R}}
\newcommand{\HH}{\mathbb{H}}
\newcommand{\C}{\mathbb{C}}
\newcommand{\B}{{\cal B}}
\newcommand{\eps}{\varepsilon}
\newcommand{\be}{\begin{equation}}
\newcommand{\ee}{\end{equation}}
\newcommand{\old}[1]{}
\newcommand{\tr}{\text{tr}}
\newcommand{\qdet}{\text{qdet}}
\newcommand{\red}[1]{{\color{red} #1}}
\newcommand{\td}[1]{{\red{\textbf {[#1]}}}}
\def\noproof{\hfill \Box}
\def\init{{\cal I}}
\def\tilings{{\cal N}}
\def\disp{\displaystyle}
\begin{document}

\title{Principal minors and rhombus tilings}
\author{Richard Kenyon\thanks{The research of RK was supported by NSF Grant DMS-1208191}\\ Brown University\\ Providence, RI 02912, USA \and 
Robin Pemantle\thanks{The research of RP was supported by NSF Grant DMS-1209117}
\\ University of Pennsylvania\\ Philadelphia, PA 19104, USA
}
\maketitle

\begin{abstract}
The algebraic relations between the principal minors of a generic 
$n\times n$ matrix are somewhat mysterious, see e.g.~\cite{lin-sturmfels}.  
We show, however, that by adding in certain \emph{almost} principal minors,
the ideal of relations is generated by translations of a single relation, 
the so-called hexahedron relation, which is a composition of six cluster 
mutations.

We give in particular a Laurent-polynomial parameterization of the space
of $n\times n$ matrices, whose parameters consist of certain principal
and almost principal minors.  The parameters naturally live on vertices
and faces of the tiles in a  rhombus tiling of a convex $2n$-gon.
A matrix is associated to an equivalence class of tilings,
all related to each other by Yang-Baxter-like transformations.

By specializing the initial data we can similarly parametrize the space 
of Hermitian symmetric matrices over $\R, \C$ or $\HH$ the quaternions.
Moreover by further specialization we can parametrize the space of 
\emph{positive definite} matrices over these rings.  
\end{abstract}

\section{Introduction}

Classical statistical mechanical models related to free fermions, 
such as the Ising model and dimer model on planar graphs, are solved 
by determinantal methods.  In these models the energy correlations or 
edge correlations are determinantal, in the sense that they are computed 
as (principal) minors of an underlying matrix kernel~\cite{Keny2009}.
The Yang-Baxter equation for the Ising model, and its analogue for 
the dimer model (the ``urban renewal transformation") both arise from 
algebraic identities among the minors of the corresponding matrix kernel. 
One would like to understand, for a model with ``generic" interactions, 
in what sense these identities are the \emph{only} algebraic identities 
among the correlations.  For these reasons it is of interest 
to study in an abstract setting the algebraic relations between 
the principal minors of a matrix.  Remarkably, once one adds in 
``almost" principal minors, we arrive at a complete description 
in terms of the translates of a single relation (the hexahedron 
relation).  Furthermore our description is two-dimensional, in the 
sense that sets of free parameters naturally lie on planar networks 
(rhombus tilings of polygons).

A {\em principal minor} of a complex $n \times n$ matrix $M$ is
the determinant of a submatrix $M_A^A$ where $A$ is a subset of
$[n] := \{ 1 , \ldots , n \}$ and $M_A^B$ denotes the submatrix
of $M$ obtained by restricting rows to $A$ and columns to $B$.
There are $2^n$ principal minors of $M$ if one includes the 
trivial minor $\det M_\emptyset^\emptyset := 1$.  Introducing an
indeterminate $x_A$ for each nontrivial minor, one may ask what 
polynomial relations hold among the minors, that is, what polynomials 
in $\C [x_A : A \subseteq [n]]$ always vanish.  

If one restricts attention to symmetric matrices, the answer 
is reasonably nice.  The ideal of relations among the principal 
minors of a symmetric matrix of rank~4 is given in~\cite{holtz-sturmfels}
and conjectured for all ranks; at the set-theoretic level, this was 
proved by Oeding~\cite{Oedi2011}.
The algebraic relations between these principal minors of a general
$n \times n$ matrix are, by contrast, somewhat mysterious.  For example, 
when $n=4$, Lin and Sturmfels~\cite{lin-sturmfels} show that the ideal 
of all polynomial relations is minimally generated by 65~polynomials 
of degree~12 (see also the statement without proof in~\cite{BoRa2005}).
Part of the complication occurs because certain
vectors of values $\{ x_A \}$ that have a lot of zeros 
cannot occur as collections of minors even though they satisfy 
the relations in a sort of vacuous way.  Looking only at {\em generic} 
vectors of values, namely those in $\disp (\C \setminus \{ 0 \})^{2^n}$,
ameliorates this problem.  The complex torus $\disp (\C \setminus 
\{ 0 \})^{2^n}$ is coordinatized by the Laurent algebra 
$\C [x_A , x_A^{-1} : A \subseteq [n]]$, over which we will 
work henceforth.  

Say that $\det M_A^B$ is an {\em almost-principal minor} if $A, B 
\subseteq [n]$ with $|A| = |B|$ and if the sets differ by swapping
only one element: there are distinct elements $x \in A$ and $y \in B$ 
such that $A \setminus \{ x \} = B \setminus \{ y \}$.  Note that in
our lingo, a principal minor is not almost principal; this differs
from the convention in, for example,~\cite{Stur2008}.
Divide the almost-principal
minors into two classes, say {\em odd} and {\em even}, by putting
$M_A^B$ in the odd class if $A = S \cup \{ i \}$ and $B = S \cup \{ j \}$
with $(i-j) (-1)^{|S|} > 0$.  In other words, if the extra row index
is greater than the extra column index then the parity of the minor 
is the same as the parity of $S := A \cap B$, but when the extra
column index is greater than the extra row index, then the parity 
of the minor is opposite to the parity of $S$.

Our first result concerns the relations that hold among the
principal minors and the odd almost-principal minors (by symmetry 
we could use even almost-principal minors instead).  
Arranging the subsets of $[n]$ on a Boolean lattice, we show
that the ideal in the Laurent algebra of relations holding
among all generic vectors of principal and almost-principal minors
is generated by (lattice) translates of a single polynomial relation, 
the so-called hexahedron relation of~\cite{KP-ising}.  The hexahedron 
relation is a set of four polynomial relations holding among fourteen
variables indexed by the eight vertices and six faces of a cube.
For any Boolean interval $[S , S \cup D]$ of rank three in the
Boolean lattice $\B_n$ of rank $n$, the vertices and faces may be 
naturally associated with the eight principal and six odd almost 
principal minors of the form $\disp{\{ \det M_{S \cup A}^{S \cup B} \, : 
\, A , B \subseteq D \}}$.  As $[S , S \cup D]$ vary over rank-3
Boolean intervals in $\B_n$, the corresponding hexahedron relations
generate the prime ideal of all polynomial relations among these minors.

This result is stated as Theorem~\ref{th:ideal} below.  It implies the 
weaker notion of set-theoretic generation of all relations.  Philosphically
we interpret this as follows.  The ideal of relations among principal
minors is complicated when represented directly.  However, its tensor
with the Laurent algebra is the intersection of the ideal generated 
by hexahedron relations with the subring of Laurent polynomials in 
only the principal minor variables.  

In the terminology of cluster algebras (see, e.g.,~\cite{FoZe2002}),
the hexahedron relation is a composition of six cluster mutations.
These are instances of the so-called {\em urban renewal} 
transformations invented by Kuperberg and described in~\cite{Ciuc1998}.
This allows us explicitly to parameterize the variety of 
all possible collections of principal and odd almost principal
minors.  One must first pick a set of variables $x_A^B$, 
call these the {\em initial conditions}, to specify, and then 
describe the remaining variables in terms of the initial conditions.
There are many ways of choosing the set of variables for the
initial conditions.  These correspond naturally
to the rhombus tilings of a $2n$-gon (see Section~\ref{sec:2n-gon} below).  
For any fixed tiling,
the matrix entries and all principal and odd almost principal
minors are Laurent polynomials in the initial variables
associated with the chosen tiling.  This is Theorem~\ref{th:laurent}
below.  Given the cluster algebra representation, this comes as no
surprise because compositions of mutations in a cluster algebra 
preserve the property of being a Laurent polynomial (see~\cite{FZ});
as one varies the tiling, the associated variables are related by 
Yang-Baxter-like transformations preserving the Laurent property.

In the last part of the paper we specialize the initial data
to subclasses, obtaining parameterizations for certain subclasses
of matrices.  We parameterize the class of Hermitian matrices,
and restricting to $\R$, the class of real symmetric matrices.
This is Theorem~\ref{th:hermitian} below. In this case the 
hexahedron relation specializes to the so-called \emph{Kashaev relation} 
which arises in the Yang-Baxter relation for the Ising model, 
see \cite{KP-ising}.  We also extend to 
a non-commutative setting and parametrize the 
quaternion-Hermitian matrices (Theorem~\ref{th:quat} below).
Moreover by further specialization we can parametrize the space of 
\emph{positive definite} matrices over these rings 
(Theorem~\ref{positive} below).  This is a positive description 
in the sense that the entries are positive Laurent polynomials 
in the parameters, satisfying interval constraints.

\section{The hexahedron relation and matrix minors}

Let $a$ be a function on the set of vertices and faces of a cube.
Label the vertices and faces of a cube by indices~0 thorugh~9
and $0^*, 1^*, 2^*$ and $3^*$ so that the values of $a$, 
denoted by $a_0, \ldots , a_9, a_0^*, \ldots , a_3^*$ are
arranged on the cube as in Figure~\ref{cubemove}.  

\begin{figure}[ht!]
\begin{center}\includegraphics[width=4in]{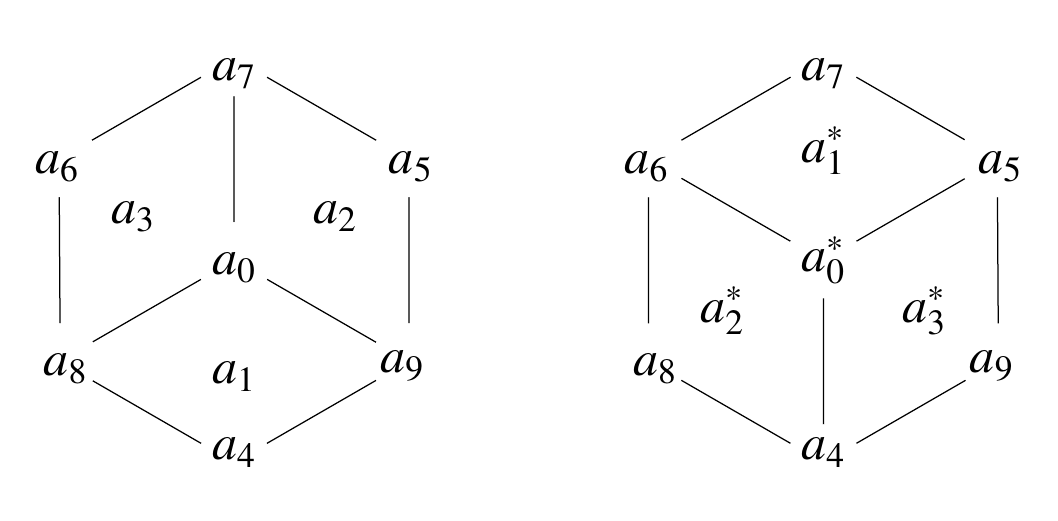}
\\ Valley view \hspace{1.6in} Peak view \end{center}
\caption{The variables in the hexahedron relation}
\label{cubemove}
\end{figure}

The function $a$ is said to satisfy the \emph{hexahedron relation} on this 
cube if the following four polynomial identities hold.  

{\small \begin{eqnarray}\label{hh1}
a_1^*a_1a_0&=&a_1a_2a_3+a_7a_8a_9+a_0a_4a_7\\
\label{hh2}a_2^*a_2a_0&=&a_1a_2a_3+a_7a_8a_9+a_0a_5a_8\\
\label{hh3}
a_3^*a_3a_0&=&a_1a_2a_3+a_7a_8a_9+a_0a_6a_9\\
a_0^*a_0^2a_1a_2a_3&=&
(a_1a_2a_3)^2+a_1a_2a_3(2a_7a_8a_9+\nonumber
a_0a_4a_7+a_0a_5a_8+a_0a_6a_9)+\\&&\label{hh4}
+(a_8a_9+a_0a_4)(
a_9a_7+a_0a_5)(a_7a_8+a_0a_6).
\end{eqnarray}}

Note that the relation is symmetric under cyclic rotation around the 
$a_0a_0^*$ axis; one can check that this relation is also ``top-down" 
symmetric: symmetric under the reversal 
$$a_0^*\leftrightarrow a_0,~a_1^*\leftrightarrow a_1,~a_2^*
   \leftrightarrow a_2,~a_3^*\leftrightarrow a_3,~a_4\leftrightarrow a_7,~a_5
   \leftrightarrow a_8,~a_6\leftrightarrow a_9.$$
This relation was introduced in~\cite{KP-ising}, where the cube
was taken to vary over cells of the cubic lattice $\Z^3$ and
the hexadron relations taken to define translation invariant 
relations on a function on vertices and faces of the cubic lattice.
The relations were shown there to be compositions of six cluster 
mutations.  Initial conditions in this case correspond to stepped
surfaces in the cubic lattice and the cluster structure implies
that all variables are Laurent polynomials in any set of initial
variables.  
\old{
Also there it was shown that the hexahedron relations extend the 
Kashaev relations~\cite{Kash1996} for functions on the the vertices 
of $\Z^3$.  However, where the Kashaev relations are quadratic, 
the hexahedron relations are linear in the variables 
$a_0^*, a_1^*, a_2^*$ and $a_3^*$.
The relations may be therefore used as a recurrence defining the values 
$a_0^*, a_1^*, a_2^*$ and $a_3^*$ in terms of the values 
$a_0 , \dots , a_9$.  Whereas the Kashaev recurrence is quadratic,
hence branching, the hexahdron recurrence is well defined as long as
as long as $a_0a_1a_2a_3 \neq 0$.  Embedding the Kashaev relations
in the hexahedron relations by adding face variables thus simplifies
the recurrence considerably.
}

In the present work, we show that the hexahedron relation is the
relation satisfied by the minors of a matrix.  This requires placing
the hexahedron relations on the Boolean lattice $\B_n$ 
(the $n$-cube $\{0,1\}^n$ with its natural partial order)
in place of the cubic lattice $\Z^3$.  We do so by allowing the cube 
in Figure~\ref{cubemove} to vary over Boolean intervals of rank~3 
in the rank-$n$ Boolean lattice.  We do this in a way that obtains
the picture in Figure~\ref{corr2}, which we now explain.

\begin{figure}[ht!]
\begin{center}\includegraphics[width=4in]{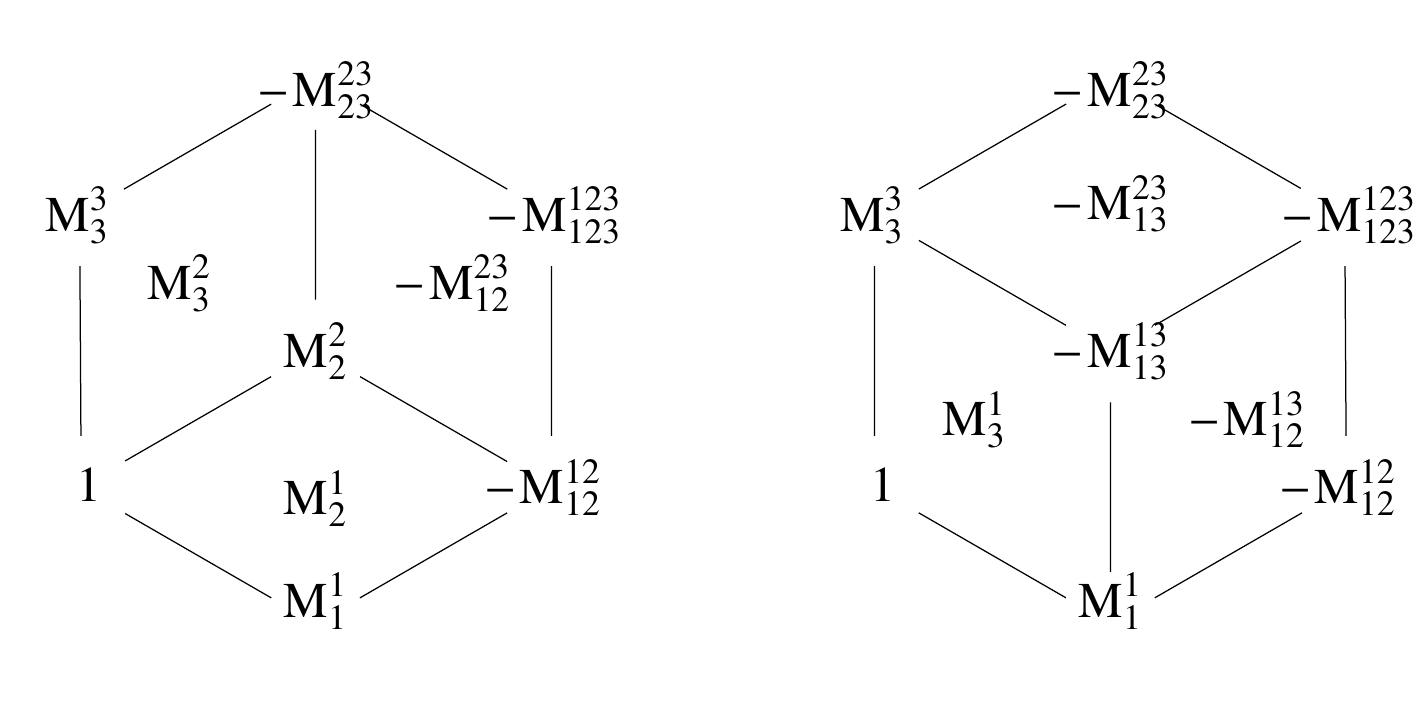}\end{center}
\caption{Arrangement of principal and near principal minors on $\B_3$}
\label{corr2}
\end{figure}

Fix $n$ and an interval $\init := [S , S \cup D]$ 
of rank three in $\B_n$.  The order preserving bijection $\alpha$
between $\{ 1, 2, 3\}$ and $D$ induces a bijection between $\B_3$ and $\init$,
explictly $\alpha_* (A) = (S \cup \alpha[A])$.  The notation for 
matrix minors becomes less cluttered if we use $M_A^B$ in place
of $M_{\alpha*(A)}^{\alpha_*(B)}$ when the set $S$ can be understood.
Using this abbreviation, write the principal minor $M_A^A$ at the
element $A \in \B_3$.  Interpreting the Hasse diagram of $\B_3$ as
a cube, each of the six faces  is a rank-2 interval.  If $A$ and $B$
are the two middle-rank elements of such an interval, then associate
with the corresponding face the almost principal minor $M_A^B$ or $M_B^A$, 
choosing whichever one of these is odd.  We now have a set of 
eight principal and six odd almost principal minors of $M$ associated
with the eight vertices and six faces of $\B_3$.  We need to change
the signs of seven of these, namely the vertices of rank~2 and~3
and the upper faces.  Invoking the hexahedron
recurrence is now a matter of matching to Figure~\ref{cubemove}, which
we do in a slightly non-intuitive manner, matching $M_\emptyset^\emptyset$
to $a_8$, $M_1^1, M_2^2$ and $M_3^3$ to $a_4, a_0$ and $a_6$ respectively,
and so on (there is only one way to extend this graph isomorphism to
the whole cube).  The result is Figure~\ref{corr2}.

\begin{lemma}\label{hexminorcorr}
Under the correspondence between the diagrams in Figures~\ref{cubemove}
and~\ref{corr2}, the minors of $M$ satisfy the hexahedron relation. 
\end{lemma}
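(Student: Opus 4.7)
Since every variable appearing in a hexahedron relation at a rank-3 interval $[S,S\cup D]$ of $\B_n$ is a minor of the $3\times 3$ submatrix $M_{S\cup D}^{S\cup D}$, we may reduce without loss of generality to the case $n=3$, $S=\emptyset$, and treat $M$ as a generic $3\times 3$ matrix with entries $m_{ij}$. The first step is to make explicit the dictionary implicit in the paper's phrase ``there is only one way to extend this graph isomorphism.'' Starting from the four anchors $a_8,a_4,a_0,a_6 \leftrightarrow M_\emptyset^\emptyset, M_1^1, M_2^2, M_3^3$, together with the declared sign changes on the rank-2 and rank-3 vertices and on the three upper faces of $\B_3$, I would determine each of the remaining ten variables $a_1,a_2,a_3,a_5,a_7,a_9,a_1^*,a_2^*,a_3^*,a_0^*$ explicitly as a specific signed principal or odd almost-principal minor of $M$. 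In particular $a_0^* = \pm\det M$ and the three $a_i^*$ are signed $2\times 2$ principal minors.

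With this dictionary in hand, each of (hh1)--(hh4) becomes an identity between polynomials in the nine entries $m_{ij}$. For (hh1)--(hh3), which are cubic on the right and linear in a single starred variable on the left, I would verify the identity by regrouping the right-hand side (for instance $a_7 a_8 a_9 + a_0 a_4 a_7 = a_7(a_8 a_9 + a_0 a_4)$) so that each binomial factor becomes a $2\times 2$ principal minor of $M$, and then recognising what remains as the classical Desnanot--Jacobi (Dodgson condensation) identity
\begin{equation*}
\det M \cdot \det M_T^T \;=\; \det M_A^A\,\det M_B^B \;-\; \det M_A^B\,\det M_B^A
\end{equation*}
applied to a suitable $2\times 2$ block of $M$, with signs tracked via the odd/even parity rule. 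The three cases are related by relabelling $\{1,2,3\}$, so effectively only one verification is needed.

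The main obstacle is (hh4), which has degree six in the $a_i$ and contains the triple product $(a_8a_9+a_0a_4)(a_9a_7+a_0a_5)(a_7a_8+a_0a_6)$. My plan is to use the now-verified (hh1)--(hh3) to eliminate $a_1^*,a_2^*,a_3^*$, reducing (hh4) to a polynomial identity purely among the unstarred signed minors and $\det M$; I expect this reduced identity to follow from Sylvester's determinantal identity, or equivalently from a short composition of further Desnanot--Jacobi applications on $M$. If the algebraic reduction proves cumbersome, (hh4) may instead be verified directly as a polynomial identity in the $m_{ij}$: both sides have bounded degree, so comparing monomial coefficients is a routine (if tedious) check that can be discharged by symbolic computation. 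The chief difficulty is the combinatorial bookkeeping and sign-tracking throughout.
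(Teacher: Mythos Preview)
Your reduction to the $3\times 3$ case is not justified. The variables in the hexahedron relation at the interval $[S,S\cup D]$ are the signed minors $\det M_{S\cup A}^{S\cup B}$ of the \emph{full} matrix $M$, for $A,B\subseteq D$. When $A=B=\emptyset$ this is $\det M_S^S$, a determinant of size $|S|$; it is a minor of $M_{S\cup D}^{S\cup D}$, but not one that arises from any $3\times 3$ pattern. The identity you must verify is a relation among minors of sizes $|S|,|S|+1,|S|+2,|S|+3$, not among minors of a $3\times 3$ matrix, and your first sentence does not establish the passage between the two.

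The paper closes exactly this gap with Muir's law of extensible minors: any \emph{homogeneous} determinantal identity (each monomial a product of minors whose orders sum to the same total) remains valid when every index set is enlarged by a common disjoint set $S$. The paper notes that (hh1)--(hh3) are homogeneous of total order $4$ and (hh4) of total order $8$, so the $n=3$ verification propagates to every rank-$3$ interval of $\B_n$. You need this (or an equivalent argument via Schur complements and Jacobi's identity together with the homogeneity) to complete the reduction.

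For the base case $n=3$ your plan is actually more explicit than the paper's bare ``quick algebraic verification,'' and recognising (hh1)--(hh3) as Desnanot--Jacobi after the regrouping $a_7a_8a_9+a_0a_4a_7=a_7(a_8a_9+a_0a_4)$ is sound. One minor slip: (hh4) contains only $a_0^*$, not $a_1^*,a_2^*,a_3^*$, so there is nothing to eliminate; you would substitute the minor expression for $a_0^*$ directly and check the resulting polynomial identity in the $m_{ij}$.
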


\begin{proof}
When $n=3$, the only choice for $S$ is $S = \emptyset$ and the
abbreviation and actual notation $M_A^B$ coincide.  In this case
the proof is a quick algebraic verification.
Muir's law of extensible minors~\cite{muir1883}, states that 
``a homogeneous determinantal identity for the minors of a matrix 
remains valid when all the index sets are enlarged by the same 
disjoint index set.''  (See~\cite{berliner-brualdi} for this wording 
and~\cite[Section~7]{brualdi-schneider} for a proof).  Here, 
homogeneity means that every monomial in the identity is a product 
of determinants of degrees summing to the same value.  In the first 
three hexahedron identities \eqref{hh1}--\eqref{hh3} every monomial 
has degree~4, while in~\eqref{hh4}, every monomial has degree~8.  
The conclusion of  Muir's law is the conclusion of the lemma.
\end{proof}

\section{$2n$-gon networks} \label{sec:2n-gon}

On the cubic lattice, initial conditions are stepped surfaces,
with moves from one stepped surface to another corresponding to
the addition or removal of a cube.  The Boolean lattice is a 
cell complex and although its dimension is not~3, addition and
removal of a 3-cube still represents a well defined family of
moves between 2-chains in a family of 2-chains sharing a common
boundary.  These two-chains, which correspond to initial conditions,
are described by tilings of a $2n$-gon, as we now describe.  One
of these tilings is called the {\em standard tiling} and is shown
in Figure~\ref{4genexample} (ignore the dotted lines for now).

\begin{figure}[ht!]
\begin{center}\includegraphics[width=2.5in]{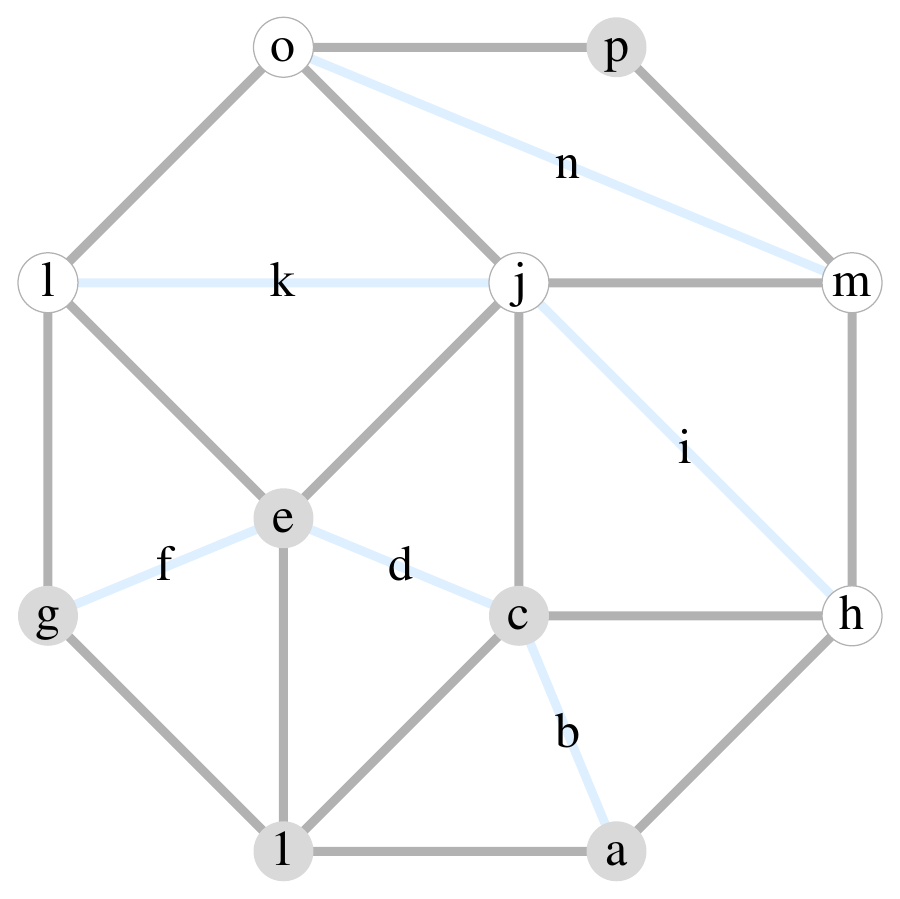}\end{center}
\caption{\label{4genexample}The standard network.  White vertices 
will be those with $\sigma(v)=-1$.}
\end{figure}

Let $P_n$ be the regular $2n$-gon with unit length edges, 
oriented so that it has a horizontal edge.  Let $v_0$ be the 
vertex of $P_n$ which is the left endpoint of the lower 
horizontal edge. Place $P_n$ so that $v_0$ is at the origin 
in $\R^2$.  The polygon $P_n$ is the projection to the plane 
of the $n$-cube $[0,1]^n$ with the property that for each 
$j \in [n]$, the basis vector ${\mathbf e}_j$ projects to the vector 
$e_j := e^{\pi i(j-1)/n}$.

The tilings of $P_n$ we consider are tilings by translations 
of the set ${\cal W}_n$ of tiles, where ${\cal W}_n$ is the set 
of rhombi $R_{jk}$ with unit edges parallel to two distinct edges 
$e_j$ and $e_k$ of $P_n$.  
The set ${\cal W}_n$ has cardinality $\disp{\binom{n}{2}}$.  
Each tile in ${\cal W}_n$ occurs precisely once in each tiling.  It may not be
obvious that there exist such tilings (or even that the areas of tiles 
in ${\cal W}_n$ sum to the area of $P_n$) but the following construction 
of the standard tiling shows there to be at least one such tiling.
Define the \emph{standard tiling} $T_0$ by placing all rhombi
$R_{i,i_+1}$ with their lowest point at the origin (in the case of
$R_{12}$, the leftmost lowest point).  In the $n-2$ gaps between
the uppermost extensions of these, place the rhombi $R_{i,i+2}$,
and continue this way until the rhombus $R_{1n}$ is placed, 
filling the last hole in $P_{2n}$.  This tiling has the property
that the vertices are precisely the points $v_0 + \sum_{j \in G} e_j$ 
for some set $G$ of consecutive elements of $[n]$.

A ``cube move'' consists in taking three tiles of $T$ whose union 
is a hexagon and replacing them with the three same tiles in the 
other order, effectively ``pushing" the tiling across a $3$-cube.
Lifting back to $\B_n$, one sees that all the 2-chains have the same
boundary, which is the lifting of the boundary of $P_n$ to $\B_n$.
Each vertex $v$ of the tiling lifts to a lattice point in $\B_n$, 
which is the sum of $e_j$ for all $j$ such that $e_j$ is on 
the path from the origin to $v$ using edges of the tiling.  The
space of all tilings of $P_n$ by ${\cal W}_n$ is connected under 
cube moves: see~\cite{keny1993}.

\subsubsection*{Labeled tilings}

A \emph{$2n$-gon network} is a labeled $2n$-gon tiling.  Formally,
this means it is a pair $(T,F)$ where $T$ is a tiling and $F$ is
a real or complex function on $U(T)$, the set of faces and vertices
of $T$.  (In the last section we consider quaternionic networks and
matrices.)

Two networks are \emph{equivalent} if one can be obtained from the 
other by a sequence of cube moves, in which the tiles are replaced 
by a cube move and the vertex and face values undergo a hexahedron 
transformation, meaning that the values on the center vertex 
$a_0$ and the faces $a_1,a_2,a_3$ are transformed to 
$a_0^*,a_1^*,a_2^*,a_3^*$ on the new network or vice versa.
We also allow as an equivalence move multiplication of all 
values by a single nonzero constant; the hexahedron relations
are homogeneous, hence always preserved by such scaling.
We say that a $2n$-gon network is {\em generic} if it and all 
equivalent networks have only nonzero labels.  

\begin{prop}\label{multi}
The equivalence class of a generic network contains precisely
one network $(T,F)$ for a given $T$ such that $F(v_0) = 1$.
\end{prop}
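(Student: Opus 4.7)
The proposition has two parts: existence of a representative on $T$ with $F(v_0)=1$ in the equivalence class, and uniqueness. I will prove existence via connectivity of the flip graph, then reduce uniqueness to a consistency statement for closed loops of cube moves. A useful preliminary observation is that the hexahedron relations~\eqref{hh1}--\eqref{hh4} are homogeneous: rescaling all fourteen variables by a common factor $\lambda$ rescales each equation equally. Consequently, scaling commutes with cube moves, and any chain of equivalence moves can be rearranged as a sequence of cube moves followed by a single global rescaling.

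\textbf{Existence.} Given any representative $(T_1,F_1)$, connectivity of the ${\cal W}_n$-tiling flip graph under cube moves~\cite{keny1993} yields a sequence $T_1=T^{(0)}\to\cdots\to T^{(k)}=T$. Each cube move is implemented by solving~\eqref{hh1}--\eqref{hh4} for the four labels that change (the central vertex and three incident faces) in terms of the ten that are unchanged; the denominators are nonzero by the genericity hypothesis. The resulting network $(T,\tilde F)$ is in the equivalence class, and rescaling by $\tilde F(v_0)^{-1}$ produces the required $(T,F)$ with $F(v_0)=1$.

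\textbf{Uniqueness, reduction step.} Suppose $(T,F_1)$ and $(T,F_2)$ are equivalent generic networks, both normalized with $F_i(v_0)=1$. Using the commutation of scalings with cube moves, rewrite the equivalence as a sequence of cube moves taking $(T,F_1)$ to some $(T,F')$ followed by a single scaling $F_2=\lambda F'$. The vertex $v_0$ lies on $\partial P_n$ and cannot be the interior central vertex of any hexagonal region supporting a cube move, so cube moves preserve $F(v_0)$; hence $F'(v_0)=1$ and $\lambda=1$. It remains to show that a loop of cube moves from $T$ back to $T$ acts as the identity on labels, i.e.\ that $F_1=F'$.

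\textbf{Main obstacle: consistency around loops.} This loop-consistency is the substantive content of the proposition. The plan is to exploit the cluster-algebraic identification, discussed in the introduction and established in~\cite{KP-ising}, of each cube move with a composition of six cluster mutations. In the mutation groupoid of a cluster algebra the variables attached to a given seed are well-defined rational functions of the initial data, independent of the mutation sequence used to reach that seed; translated into our language, this says that the labels on any tiling reachable from $(T,F_1)$ by cube moves depend only on $(T,F_1)$ and the target tiling, not on the path taken. Loops at $T$ therefore act trivially, forcing $F_1=F'$. As an alternative more combinatorial route, one could verify consistency directly on a generating set of flip-graph loops, for instance the Zamolodchikov-type relations coming from rank-four Boolean intervals in $\B_n$. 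I expect this consistency check to be the only non-routine step of the argument; everything else is elementary bookkeeping.
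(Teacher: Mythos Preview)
Your existence argument and the reduction of uniqueness to loop consistency are both fine; the problem lies in how you discharge loop consistency.

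The cluster-algebra argument, as stated, has a genuine gap. Knowing that a cube move is a composition of six cluster mutations gives you the Laurent phenomenon, but it does \emph{not} give you, for free, that two mutation sequences reaching ``the same tiling'' produce the same cluster variables. A seed is a pair (cluster, exchange matrix); in infinite type one can have distinct seeds with isomorphic exchange matrices, so the exchange matrix (which is all the tiling encodes) does not determine the cluster. What you would need is that the seed is uniquely determined by the tiling, and that is exactly the loop-consistency statement you are trying to prove, so the argument is circular unless you import a specific theorem to that effect. Your fallback---checking consistency on a generating set of flip-graph loops coming from rank-$4$ Boolean intervals---is a correct strategy in principle, but it requires (a) a citation or argument that those loops generate, and (b) an explicit verification of the Zamolodchikov-type identity for the hexahedron recurrence, neither of which you supply.

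The paper bypasses all of this with a different idea. It first constructs, from any generic standard network $(T_0,F)$ with $F(v_0)=1$, a matrix $A$ whose contiguous principal and odd almost-principal minors reproduce $F$ on $U(T_0)$ (Theorem~\ref{nettomtx}). Since the minors of \emph{any} matrix satisfy the hexahedron relations (Lemma~\ref{hexminorcorr}), each cube move applied to $(T,F_{A,T})$ necessarily yields $(T',F_{A,T'})$. Hence every network in the equivalence class is of the form $(T,F_{A,T})$ for this fixed $A$, which forces uniqueness on each $T$. The loop consistency you were aiming for thus falls out of the existence of a single global object (the matrix $A$) whose minors are compatible with all cube moves simultaneously; no Zamolodchikov check or cluster-groupoid argument is needed.
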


In other words, if two sequences of cube moves lead to the same tiling, 
then the resulting network does not depend on the sequence of cube 
moves leading to it.  The proof will follow from Theorem \ref{nettomtx} 
below; see the remarks after the proof of that theorem. 

\section{Correspondence between matrices and networks}

Let $M_n^* (\C)$ denote the set of generic $n \times n$ complex matrices,
meaning those with only nonzero minors.  Let $\tilings$ denote the set of 
generic $2n$-gon networks.  In this section we describe a map 
$\beta_T$ of the form $A \mapsto (T , F_{A,T})$ and a map 
$\Psi : \tilings \to M_n^* (C)$ that together establish a bijection 
between $M_n^* (\C)$ and equivalence classes in $\tilings$.

\subsection{Matrices to networks}

Let $A \in M_n^* (\C)$ be a matrix and $T$ a tiling of the $2n$-gon.
For a vertex $v$ of $T$, define $\sigma(v) = (-1)^{\lfloor d/2\rfloor}$ 
where $d$ is the graph distance in the tiling from $v$ to $v_0$.
Recall that $U(T)$ denotes the union of the vertices and faces of $T$.  
Define a function $F = F_{A,T}$ on $U(T)$ as follows.
Each vertex $v$ of $T$ is naturally associated with a point in 
$\B_n$, that is, a subset $S \subseteq [n]$.  Let
$F(v) = \sigma(v)\det A_S^S$ where $A_S^S$ is the principal minor 
of $A$ indexed by $S$.  On a rhombus $R_{ij}$ with vertices 
$v,v+e_i,v+e_i+e_j,v+e_j$ and $i<j$ we assign the value
\begin{equation} \label{eq:F}
F(R_{ij}) = \sigma(v) \det K_{S\cup\{j\}}^{S\cup\{i\}}
   ~~\mbox{ or }~~ \sigma(v) \det K_{S\cup\{i\}}^{S\cup\{j\}} \, , \;
   \mbox{ whichever is the odd minor};
\end{equation}
here again $S$ is the subset of $[n]$ corresponding to $v$.

\begin{theorem}\label{mtxnetthm} For any tilings $T$ and $T'$
the networks $(T , F_{A,T})$ and $(T' , F_{A,T'})$ are 
equivalent.  Consequently the map $(A,T) \mapsto (T,F_{A,T})$
induces a function $\Phi$ mapping each matrix $A \in M_m^* (\C)$
to the equivalence class of $(T , F_{A,T})$, which does not
depend on $T$.
\end{theorem}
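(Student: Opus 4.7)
The plan is to reduce the theorem to the case where $T$ and $T'$ differ by a single cube move, and then show that in this case the transformation $F_{A,T} \mapsto F_{A,T'}$ is precisely the hexahedron transformation that defines the equivalence of networks. The global statement then follows by transitivity of equivalence, using the fact (cited from~\cite{keny1993}) that the space of tilings of $P_n$ by $\mathcal{W}_n$ is connected under cube moves.

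\noindent\textbf{Step 1: Local setup.} Suppose $T'$ is obtained from $T$ by a cube move inside a hexagonal subregion $H$. Lifting $H$ back to $\B_n$, it corresponds to a rank-$3$ Boolean interval $\init = [S, S \cup D]$ with $|D|=3$, and the two tilings of $H$ are the valley and peak views of the cube from Figure~\ref{cubemove}. All vertex and face labels outside $\init$ are untouched by the move, as are the six boundary vertices of $H$ (which correspond to $a_4,\dots,a_9$ in Figure~\ref{cubemove}). Only the central vertex of $H$ and its three interior faces differ between the two tilings; these are the data $(a_0, a_1, a_2, a_3)$ on the $T$ side and $(a_0^*, a_1^*, a_2^*, a_3^*)$ on the $T'$ side.

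\noindent\textbf{Step 2: Matching the labels to minors.} By the definition of $F_{A,T}$ in~\eqref{eq:F}, the fourteen labels inside $\init$ are, up to the signs $\sigma(v)$, exactly the eight principal minors and the six odd almost-principal minors $\det A_{S\cup A}^{S\cup B}$ that appear in Figure~\ref{corr2}. Lemma~\ref{hexminorcorr} states that these signed minors satisfy the hexahedron relation (it is here that one uses Muir's law to deduce the $n > 3$ case from $n=3$). Consequently, the labels produced by $T$ and by $T'$ on the interior of $H$ differ precisely as $(a_0,a_1,a_2,a_3)$ and $(a_0^*,a_1^*,a_2^*,a_3^*)$ in a solution of the hexahedron relation with boundary data $a_4,\dots,a_9$. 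This is the definition of an equivalence move, giving $(T, F_{A,T}) \sim (T', F_{A,T'})$.

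\noindent\textbf{Step 3: Sign bookkeeping.} The sign conventions need to be verified carefully. On the $T$ side, each of the eight vertex labels in $H$ carries a factor $\sigma(v)=(-1)^{\lfloor d(v)/2 \rfloor}$, and each face label carries the $\sigma$-factor of its distinguished base vertex together with the ``odd'' choice of orientation in~\eqref{eq:F}. After the cube move, the graph distances from $v_0$ change only for the central vertex of $H$, and the base-vertex assignment for each rhombus changes according to the new tiling. One must check that these sign changes are exactly the seven negations highlighted in Figure~\ref{corr2}: the rank-$2$ and rank-$3$ vertices of the interval and the three upper faces. This is a finite combinatorial check on the rank-$3$ Boolean lattice, matching the parity of $\lfloor d/2 \rfloor$ and the odd/even rule against the positions of the $a_i$ labels in Figure~\ref{cubemove}.

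\noindent\textbf{Main obstacle.} The only nontrivial point is the sign bookkeeping of Step~3. The hexahedron relation as stated is not symmetric under arbitrary sign changes of its variables, so one must verify that the combined effect of the distance-parity function $\sigma$ and of the odd/even almost-principal convention produces exactly the signs compatible with the identification in Figure~\ref{corr2}, both before and after the cube move. Once this bookkeeping is checked, Steps~1--2 assemble into a proof, and the induced map $\Phi$ is well defined on matrices, independent of the chosen tiling. (The uniqueness statement in Proposition~\ref{multi}, which is a consequence of Theorem~\ref{nettomtx}, is not needed for this direction, only for the converse bijection.)
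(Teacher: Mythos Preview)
Your proposal is correct and follows essentially the same approach as the paper: reduce to a single cube move, invoke Lemma~\ref{hexminorcorr} for the hexahedron relation among the minors, and then use connectivity of the tiling space under cube moves. You are more explicit than the paper about the sign bookkeeping in Step~3; in the paper this check is absorbed into the construction of Figure~\ref{corr2} and the statement of Lemma~\ref{hexminorcorr}, so the proof itself is very terse. One small omission: the paper also notes that genericity of $A$ forces all labels $F_{A,T}$ to be nonzero, so the resulting network (and hence its equivalence class) is generic in the sense required for $\Phi$ to land in $\tilings$; you should add a sentence to that effect.
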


\begin{proof}
Suppose $T$ and $T'$ differ by a cube move.  The functions $F_{A,T}$
and $F_{A,T'}$ label the vertices and faces according to the diagrams
in Figure~\ref{corr2} (the matrix is now named $A$ rather than $M$ 
and we use the convention that $[S , S \cup D]$ is mapped in the
order preserving way to $\B_3$).
By Lemma~\ref{hexminorcorr}, these two functions are related by a
hexahedron relation and are thus by definition equivalent.  Any 
two tilings are connected by a finite sequence of cube moves, hence 
$(T , F_{A,T})$ and $(T' , F_{A,T'})$ are equivalent for any $T, T'$.  
Genericty of $A$ implies $F_{A,T}$ is nowhere zero, which proves 
genericity of the equivalence class of $(T , F_{A,T})$.
\end{proof}

\begin{unremark}
This implies Proposition~\ref{multi} for networks in the range of $\Phi$.
\end{unremark}

\subsection{Networks to matrices}

Conversely, let us explain how to go from an equivalence class of 
generic networks to a matrix in $M_n^* (\C)$.  A network $(T,F)$ is
called {\em standard} if $T = T_0$ and $F(v_0) = 1$.  The key step is
to construct the map $\Psi$ taking a standard network $(T_0,F)$ to
a matrix $A$ such that $F_{T_0,A}$ agrees with $F$ on $U(T_0)$.  

Our strategy will be to assign matrix entries $A_{ij}$ in a particular 
order so that we can check inductively that the assigned entries force
$F_{A,T_0}$ to agree with $F$ on ever larger subsets of $T_0$
no matter what the values of the yet unassigned entries of $A$.
In this way we both construct $A$ and verify that $F_{A,T_0} = F$.
Visualization is easy when working with $T_0$ because each vertex 
and rhombus corresponds to a contiguous subdeterminant, 
the values of $F_{A,T_0}$ at vertices being principal minors 
of the form $\det A_{i,i+1,\ldots ,j}^{i,i+1,\ldots ,j}$ 
and the values at rhombi being odd almost-principal minors of 
the form $A_{i, \ldots , j}^{i\pm1, \ldots , j\pm 1}$.

Before giving a formal description we illustrate with an example
where $n=4$.  Figure~\ref{4genexample} shows the standard tiling of
$P_8$ with vertices and rhombi labeled by indeterminates.  
If $A$ is a matrix with $F_{T_0,A} = F$ then reading values 
of $F$ on $U(T_0)$ along successive dotted paths, starting 
from the lower right, determines successive minors of $A$
as follows.  The first dotted path contains $1 \times 1$ 
minors, therefore dictating the matrix entries 
$A_{11}, A_{21},A_{22},A_{32}, A_{33}, A_{43}$ and $A_{44}$
(notation: two subscripts for matrix entries, whereas one subscript
and one superscript for the corresonding $1 \times 1$ minor).  
The second dotted path, read right to left, gives the negatives of the minors
$A_{12}^{12},A_{23}^{12},A_{23}^{23},A_{34}^{23}$ thereby
determining $A_{12}, A_{23}, A_{34}, A_{31}$ and $A_{42}$.
The third dotted path, read right to left, gives the negatives 
of the minors $A_{123}^{123},A_{234}^{123}$ and $A_{234}^{234}$,
thereby determing $A_{13}, A_{24}$ and $A_{41}$.  The last
value is $\det A$, which now determines $A_{14}$, all other
entries of $A$ already having been determined.
Explicitly, in terms of the indeterminates labeling the
vertices and faces in Figure~\ref{4genexample}, the matrix is
given by
{\small
$$\begin{pmatrix}
 a & \frac{a c}{b}+\frac{h}{b} & \frac{h e}{b d}+\frac{a c e}{b d}+\frac{h j}{b c d}+\frac{a j}{b d}+\frac{h j}{c i}+\frac{m}{i} & X \\[1ex]
 b & c & \frac{c e}{d}+\frac{j}{d} & \frac{j g}{d f}+\frac{c e g}{d f}+\frac{j l}{d e f}+\frac{c l}{d f}+\frac{j l}{e k}+\frac{o}{k} \\[1ex]
 \frac{b d}{c}+\frac{i}{c} & d & e & \frac{e g}{f}+\frac{l}{f} \\[1ex]
 \frac{i f}{c e}+\frac{b d f}{c e}+\frac{i k}{c d e}+\frac{b k}{c e}+\frac{i k}{d j}+\frac{n}{j} & \frac{d f}{e}+\frac{k}{e} & f & g \\[1ex]
\end{pmatrix}
$$
}
where $X=$
{\small
\begin{eqnarray*}
&&
\frac{a c e g}{b d f}+\frac{a c l}{b d f}+\frac{a j l}{b d e f}
   + \frac{a g j}{b d f}+\frac{a j l}{b e k}+\frac{a o}{b k}
   + \frac{h j l}{b c d e f}+\frac{g h j}{b c d f}
   + \frac{h j l}{b c e k}+\frac{h o}{b c k}+\frac{e g h}{b d f} \\[2ex]
& + & \frac{h l}{b d f}+\frac{d h j l}{c e i k}+\frac{d h o}{c i k}
   + \frac{h j l}{c e f i}+\frac{g h j}{c f i}+\frac{d l m}{e i k} 
   + \frac{d m o}{i j k}+\frac{l m}{e f i}+\frac{g m}{f i}+\frac{m o}{j n}
   + \frac{p}{n}.
\end{eqnarray*}
}

To see why this works in general,
divide $U(T_0)$ into disjoint paths, each alternating between
vertices and rhombi.  The zeroth path is the vertex $v_0$;
the first path contains the vertices at distance~1 from $v_0$
and the rhombi $R_{i,i+1}$ between them.  The $j^{th}$ path contains
the vertices at distance $j$ from $v_0$ and the rhombi between them.
This partition is illustrated by the dotted paths in Figure~\ref{4genexample}.

Vertices on the $j^{th}$ path, $j \geq 1$, will induce assignments 
of elements of $A$ on the $(j-1)^{st}$ superdiagonal, where the 
zeroth superdiagonal is the main diagonal.  Rhombi on the $j^{th}$ 
path will induce assignments of elements of $A$ on the $j^{th}$ 
subdiagonal.  The zeroth path always contains the element~1,
so provides no new information and does not induce an assignment.

Inductively, we check that for each vertex or rhombus, the
equation that $F_{A,T_0}$ agrees with $F$ at each new face or vertex,
which is the equation $\det A_S^T = c$ for some $S, T \subseteq [n]$ 
and some number $c$.  This is a multilinear equation with precisely 
one unassigned variable.  Indeed, for vertices in the $j^{th}$ path 
it is a specification of a contiguous subdeterminant spanning from 
the diagonal to the $(j-1)^{st}$ superdiagonal while for rhombi 
on this path it is a specification of a contiguous subdeterminant 
spanning from the first subdiagonal down to the $j{th}$ subdiagonal. 

One of these linear equations is degenerate if and only if
the cofactor of that determinant vanishes.  The cofactor is
the value of $F$ at a position one row closer to the main diagonal.
Genericity of $(T_0 , F)$ implies that this is nonzero.  This completes 
the induction.  We conclude there is a unique matrix $A$ for which 
$F_{A,T_0}$ agrees with $F$; we call this $\Psi (T_0 , F)$.  
We have now proved the first and only nontrivial statement in 
the following theorem.

\begin{theorem}\label{nettomtx} 
If $(T_0 , F)$ is generic then there is a unique $A \in M_n^* (\C)$
such that $F_{A,T_0} = F$ on $U(T_0)$.  The map $A \mapsto 
(T_0 , F_{A,T_0})$ and the map $\Psi$ mapping
$(T_0,F)$ to $A$ are two-sided inverses. 
\end{theorem}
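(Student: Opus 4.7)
The plan is to make precise the inductive construction already sketched just before the theorem statement. I would partition $U(T_0)$ into the dotted paths $P_0, P_1, \ldots, P_{n-1}$, where $P_0 = \{v_0\}$ and $P_j$ consists of the vertices at graph distance $j$ from $v_0$ together with the rhombi lying between consecutive such vertices. These paths are processed in order of increasing $j$, each path traversed from right to left as in the $n=4$ example. Because $T_0$ is the standard tiling, every vertex on $P_j$ carries a contiguous principal minor of size $j$ and every rhombus on $P_j$ carries a contiguous odd almost-principal minor whose row and column index sets span adjacent diagonals; processing $P_j$ therefore assigns the entries of $A$ on the $(j-1)$-st superdiagonal (from its vertices) and on the $j$-th subdiagonal (from its rhombi).

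The core inductive claim is that at each new vertex or rhombus, the equation $F_{A,T_0} = F$ at that position becomes a determinantal identity $\det A_S^T = c$ in which exactly one entry of $A$ has not yet been assigned. Expanding the determinant along the row or column through the new entry produces a linear equation whose coefficient is a contiguous subdeterminant of $A$ one size smaller; by the order in which entries were assigned, this cofactor equals (up to sign) the $F$-value at a previously-processed vertex or rhombus, and is therefore nonzero by genericity of $(T_0,F)$. The new entry is thus uniquely determined, and the induction builds a unique matrix $A$ with $F_{A,T_0} = F$ on all of $U(T_0)$.

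To land in $M_n^*(\C)$, I would invoke the network-genericity hypothesis that every network equivalent to $(T_0,F)$ is nowhere zero. Since the map $A \mapsto (T_0, F_{A,T_0})$ recovers $(T_0, F)$ by construction, the equivalence class $\Phi(A)$ of Theorem~\ref{mtxnetthm} is exactly the class of $(T_0,F)$; hence every principal and almost-principal minor of $A$ across all tilings is nonzero, which is the form of genericity used throughout the paper (nonvanishing of the remaining minors follows from the Laurent description of Theorem~\ref{th:laurent}). The two-sided inverse assertion is then immediate from the defining properties: $\Psi(T_0, F_{A,T_0}) = A$ by applying the construction to the network $(T_0, F_{A,T_0})$, while $F_{\Psi(T_0,F), T_0} = F$ because $\Psi(T_0,F)$ was chosen to match $F$ at every vertex and rhombus of $T_0$.

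The main obstacle is the combinatorial bookkeeping inside the inductive step: one must verify that traversing the paths in the specified order really does introduce exactly one new matrix entry per equation, and that the resulting cofactor coincides with a previously-processed $F$-value rather than with an unassigned variable. The contiguous-index structure of the standard tiling makes this accounting tractable, but signs and index ranges must be tracked carefully; beyond this bookkeeping the argument is routine linear algebra.
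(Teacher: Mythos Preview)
Your proposal is correct and follows the same line as the paper: the inductive sweep along the dotted paths of $T_0$, with each new determinantal equation linear in exactly one unassigned entry and nondegenerate because its cofactor is a previously-recorded (hence nonzero) $F$-value, is exactly the construction the paper carries out in the paragraphs preceding the theorem, and your verification of the two-sided inverse is the obvious one. The paper's brief formal proof adds one extra observation you omit---that the hexahedron relations propagate $F_{A,T}=F$ across cube moves, which is how Proposition~\ref{multi} is deduced---while your appeal to Theorem~\ref{th:laurent} for the nonvanishing of the \emph{remaining} minors of $A$ is not something that theorem actually delivers (though the paper does not address this point either).
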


\begin{proof}
The construction always produces a matrix $A$ such that
$F_{A,T_0}$ agrees with the given $F$.  If $(T,F)$ and $(T',F')$
are related by a cube move and $F_{A,T} = F$ then $F_{A,T'} = F'$
because the hexahedron hold for the minors of $A$.  Therefore,
if $(T,F)$ is equivalent to $(T,F')$ then $F = F'$.  This
proves there is only one network $(T,F)$ in each equivalence class,
implying Proposition~\ref{multi}, ensuring that $\Psi$ is well defined,
and proving that $A \mapsto (T_0 , F_{A,T_0})$ and $\Psi$ are inverses.
\end{proof} 

\subsection{Further properties of the correspondence}

Each matrix $M \in M_n (\C)$ has a vector of $2^n$ principal minors.
Let $\C^*$ denote $\C \setminus \{ 0 \}$.  Let $V$ be the variety 
in $(\C^*)^{2^n}$ consisting of all nowhere vanishing vectors of 
principal minors of matrices in $M_n (\C)$.  The ideal in
$L := \C [x_S , x^{-1}_S : S \subseteq [n]]$ of Laurent polynomials 
vanishing on $V$ is denoted $J(V)$.  Similarly, let $V'$ be the 
variety of nowhere vanishing vectors of principal and odd 
almost-principal minors of matrices in $M_n (\C)$.  Its ideal 
$J(V')$ lives in the Laurent algebra $L'$ whose variables
correspond to all vertices and faces of $\B_n$.  For these
notions and the subsequent arguments, readers not familiar
with standard algebraic notions are referred to~\cite{CLO2}.

\begin{theorem} \label{th:ideal}
The ideal $J(V')$ in $L'$ is a prime ideal generated by the 
hexahedron relations on rank-3 Boolean intervals.  The ideal 
$J(V)$ is the intersection of $J(V')$ with the sub-ring $L$.
\end{theorem}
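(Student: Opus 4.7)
The plan is to identify the ideal $I$ generated by all hexahedron relations on rank-3 Boolean intervals (together with the normalization $x_\emptyset = 1$ implicit in Proposition~\ref{multi}) with $J(V')$. Showing equality of these two ideals will simultaneously prove generation by hexahedron relations and primality, by exhibiting $L'/I$ as an integral domain. The containment $I \subseteq J(V')$ is immediate from Lemma~\ref{hexminorcorr}, so the substance of the proof is the reverse containment.

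For this, I would use the birational parametrization of $M_n^*(\C)$ by the $n^2$ values on $U(T_0) \setminus \{v_0\}$ provided by Theorem~\ref{nettomtx}, together with the Laurent expressions of Theorem~\ref{th:laurent}: let $\mathcal{R}$ be the Laurent polynomial ring in the $T_0$-parameters, and let $\phi : L' \to \mathcal{R}$ be the evaluation map sending each $x_u$ to the Laurent polynomial expressing the corresponding minor of $\Psi(T_0, F)$ in the $T_0$-parameters. Because the $T_0$-parameters are algebraically independent (they parametrize the $n^2$-dimensional domain $M_n^*(\C)$ birationally), one checks that $\ker \phi = J(V')$.

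The key reduction step is to show that $\phi$ factors through $L'/I$ as an isomorphism; equivalently, every variable $x_u$ with $u \in U(\B_n) \setminus U(T_0)$ is congruent modulo $I$ to a Laurent polynomial in the $T_0$-variables. To verify this, pick any tiling $T$ with $u \in U(T)$ and connect it back to $T_0$ by a sequence of cube moves, which exists by the connectivity of the cube-move graph \cite{keny1993}. Each cube move replaces four ``starred'' variables $(a_0^*, a_1^*, a_2^*, a_3^*)$ by four ``unstarred'' ones $(a_0, a_1, a_2, a_3)$ (or vice versa) while the six equatorial values $a_4, \ldots, a_9$ persist; relations (\ref{hh1})--(\ref{hh4}), together with their top-down symmetric counterparts, solve for the disappearing quadruple as explicit Laurent polynomials in the surviving ten variables modulo $I$. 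Induction on the number of moves back to $T_0$ completes the reduction. Hence $L'/I \cong \mathcal{R}$ is an integral domain, so $I = \ker \phi = J(V')$ is prime.

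For the second claim, note that $V$ is the Zariski closure in $(\C^*)^{2^n}$ of the projection $\pi(V')$ forgetting the almost-principal coordinates, because the condition that all odd almost-principal minors be nonzero is a Zariski-open dense condition on the irreducible variety of matrices with nonzero principal minors. Thus a Laurent polynomial $p \in L$ vanishes on $V$ iff it vanishes on $\pi(V')$ iff it vanishes on $V'$, yielding $J(V) = J(V') \cap L$. The main obstacle is the reduction step in the third paragraph: one must argue combinatorially that every vertex and 2-face of $\B_n$ appears in some tiling reachable from $T_0$, and that the iterated hexahedron reductions terminate in a well-defined canonical form. The cluster-algebra interpretation noted in the introduction essentially guarantees the Laurent property at each step, but a clean induction, e.g., ordering variables by the minimal number of cube moves needed to ``reveal'' them from $T_0$, is needed to make the argument rigorous.
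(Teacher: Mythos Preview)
Your proposal is correct and follows essentially the same strategy as the paper: both arguments show that $L'/I$ is isomorphic to the Laurent ring $\mathcal{R}$ in the $T_0$-variables by verifying that every non-initial variable is congruent modulo the hexahedron ideal to a Laurent polynomial in the initial ones, obtained by iterating cube moves back to $T_0$. The paper packages this as a step-by-step ring extension (adjoin $x_j$, quotient by one hexahedron relation of the form $x_i x_j - P$, invert $x_j$, repeat), whereas you package it via the evaluation map $\phi$ and the factorization $\mathcal{R}\hookrightarrow L'/I\twoheadrightarrow\mathcal{R}$; you also make the second claim $J(V)=J(V')\cap L$ explicit via the density argument, which the paper leaves unstated.
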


\begin{proof}
If a set of nonzero values satisfies the hexahedron relations 
then we can construct a matrix with those principal and almost
principal minors.  Conversely any collections of principal and 
almost principal minors if all nonzero satisfies the hexahedron 
relations.  From this it is immediate that the ideal in $L'$ 
of all relations satisfied by generic vectors of minors is
the radical of $J$.  It remains to show that $J$ is prime.

Showing $J$ is prime is equivalent to showing that $L' / J$ is 
an integral domain.  The variables $x_A$ in $L'$ may be 
ordered in such a way that a sequence of hexahedron relations
beginning with the standard tiling contains precisely one
relation $x_i x_j - P$ for each non-initial variable $x_j$,
where $i < j$ and $P$ is a polynomial in $\{ x_t : t < j \}$.  
Because $x_i$ is a unit in $L'$ we may replace the generator 
wih $x_j = x_i^{-1} P$.  Recursively, we extend the $L$, the
Laurent algebra in the intial variables as follows.  Adjoin
the first non-initial variable $x_j$ and take the quotient
by $x_i x_j - P$.  The ideal $(x_i x_j = P)$ is the same as
$(x_j - x_i^{-1} P)$ so it is prime, hence $L_{1a} := 
L[x_j] / (x_i x_j - P)$ is a domain.  Next invert $x_j$:
$L_{1b} := L_1 [y_j] / (x_j y_j - 1)$.  For a similar
reason, $L_{1b}$ is a domain.  Continuing in this way,
after adjoining and inverting the last variable, we arrive
at a domain isomorphic to $L' / J_0$ where $J_0$ is generated
by the chosen hexahedron relations.  Thus $L / J_0$ is radical,
and hence $J_0$ is prime.  But $J_0 = J$: by commutativity of
the hexahedron transformations, the hexahedron relations already
imposed imply satisfaction of every hexahedron relation,
whence $J \subseteq \sqrt{J_0} = J_0 \subseteq J$ and $J$ is prime. 
\end{proof}

We have seen that the entries of $A$ are determined by 
equations in the network variables, each being linear in
the new variable, hence producing a rational function of
the initial variables.  In fact a Laurent property holds.

\begin{theorem}\label{th:laurent}
Let $A = \Psi (T_0,F)$ be the matrix such that $F_{T_0,A} = F$.
Then the entries of $A$ are Laurent polynomials in the standard 
network variables, with coefficients $1$.  The monomials in $M_{ij}$ 
are in bijection with domino tilings of the half-aztec diamond from which two squares have been removed.
\end{theorem}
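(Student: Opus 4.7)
The plan is to run the inductive construction of $\Psi(T_0,F)$ from Theorem~\ref{nettomtx} while, at each step, exhibiting the new matrix entry $M_{jk}$ explicitly as a Laurent polynomial in the standard network variables. At step $(j,k)$, the entry is determined by an equation
\[
\det A_S^T \;=\; \pm\, F(u),
\]
in which the coefficient of the unknown $M_{jk}$ after cofactor expansion is $\pm F(v)$, with $v$ the vertex or rhombus adjacent to $u$ on the interior side of the current dotted path; genericity makes this nonzero. Apply the Desnanot--Jacobi (Dodgson) identity in the corner of $A_S^T$ containing $M_{jk}$. Each of the four sub-sub-minors that appears is either a value of $F$ on $U(T_0)$, hence a standard network variable, or, by induction on the dotted-path partition, already known to be a Laurent polynomial in the initial data. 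Solving the resulting linear equation for $M_{jk}$ therefore yields a Laurent polynomial in the initial data.

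To upgrade ``Laurent'' to ``Laurent with every coefficient equal to $1$'' and to produce the bijection with domino tilings, I would pass to the cluster-algebra/dimer-model picture. Since the hexahedron recurrence is a composition of six urban-renewal cluster mutations (\cite{KP-ising}), the standard tiling $T_0$ is an initial seed of a cluster algebra of bipartite-graph type, and the matrix entries, being rational solutions of the Dodgson equations above, lie in that cluster algebra. The general matching-enumeration theorem for cluster algebras from bipartite planar graphs (urban renewal corresponding to Propp's domino shuffle on the bipartite lift) expresses each cluster variable as a weighted sum, with coefficient~$1$, over the perfect matchings of an explicitly constructed graph. Tracing this construction through the induction on $(j,k)$, the underlying graph for the entry $M_{jk}$ is the dual of a half-Aztec diamond of size $|j-k|$ with two cells removed, the removed cells being precisely those that encode the boundary data of the two contiguous principal minors framing $A_S^T$. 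Each monomial of $M_{jk}$ is then the weight of a unique domino tiling of this punctured half-diamond, which is the claimed bijection.

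The main obstacle is pinning down the precise region --- a half-Aztec diamond with exactly the two correct cells removed --- and confirming that there is genuinely no cancellation. The Laurent property and positivity of the expansion are automatic from the cluster/dimer structure; what requires care is tracking each Desnanot--Jacobi step through the shuffle to verify that one does not accumulate additional forced dominoes, and checking that distinct tilings of the punctured half-diamond produce distinct Laurent monomials. Once the region is identified and this injectivity is in hand, the coefficient-$1$ statement follows from the standard multiplicativity of urban-renewal weights, and the full theorem is established.
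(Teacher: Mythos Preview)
Your proposal sketches two separate mechanisms --- an inductive Desnanot--Jacobi argument for the Laurent property, and then an appeal to the general cluster-algebra/urban-renewal matching formula for the coefficient-$1$ and bijection statements --- but neither part is actually carried out, and the second part in particular is where the real content lies. You explicitly flag ``pinning down the precise region'' as the main obstacle and leave it unresolved; since the theorem's whole point is to identify that region and the bijection, this is not yet a proof. Also, the step ``apply Desnanot--Jacobi in the corner of $A_S^T$ containing $M_{jk}$'' does not do what you want: Dodgson relates \emph{minors} of $A_S^T$, not the individual entry $M_{jk}$, so one Desnanot--Jacobi application does not isolate $M_{jk}$ as a Laurent polynomial in network variables.

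The paper's argument uses a different and sharper idea. It does not use the hexahedron recurrence or the inductive construction of $\Psi$ at all; instead it observes that the contiguous principal and almost-principal minors sit on the planes $x=y$ and $x=y+1$ of the Dodgson pyramid, while the matrix entries sit on the base $z=1$. Running the \emph{octahedron} recurrence sideways (in the $x-y$ direction rather than upward) from these two initial planes produces the matrix entries; moreover, in this direction the signed Dodgson recurrence becomes the \emph{unsigned} octahedron recurrence. One can then invoke Speyer's theorem identifying octahedron-recurrence outputs with Aztec-diamond domino tilings, together with an $\varepsilon\to 0$ specialization on the missing half of the diamond to force horizontal dominoes there, yielding the half-Aztec region with two removed squares and the coefficient-$1$ Laurent expansion in one stroke. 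The key insight you are missing is this change of propagation direction in the Dodgson pyramid and the resulting direct access to Speyer's Aztec-diamond combinatorics; going through the hexahedron/urban-renewal picture is both harder and, as you note, leaves the identification of the region unfinished.
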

 
The bijection is illustrated in Figure \ref{halfaztec4}. 
A half-aztec diamond is a region as in Figure \ref{halfaztec4} for the case $n=4$. It is a triangular stack of squares;
the bottom row consists of $2n$ squares (numbered $1$ through $2n$), and successive rows have two fewer squares.
To get the $ij$-entry of $M$ for $i\le j$, delete the 
squares on the bottom row at locations $2i-1$ and $2j$. The $M_{ij}$ entry enumerates the domino tilings of the resulting figure
using the formula of Figure \ref{halfaztec4}. 

To get the $ij$-entry of $M$ for $i>j$, delete the 
squares on the bottom row at locations $2i-1$ and $2j$ \emph{and} the outer layer of squares from the left and right sides of previous figure. 
The $M_{ij}$ entry enumerates the domino tilings of the resulting figure (a smaller half-aztec diamond)
using again the formula of Figure \ref{halfaztec4}. 

By a well-known mapping these tilings are also in a natural bijection with Schr\"oder paths (lattice paths from $(0,0)$ to $(n,n)$ using only steps 
in $\{(1,0),(1,1),(0,1)\}$ and staying at or below the diagonal).

\begin{figure}
\begin{center}\includegraphics[width=3in]{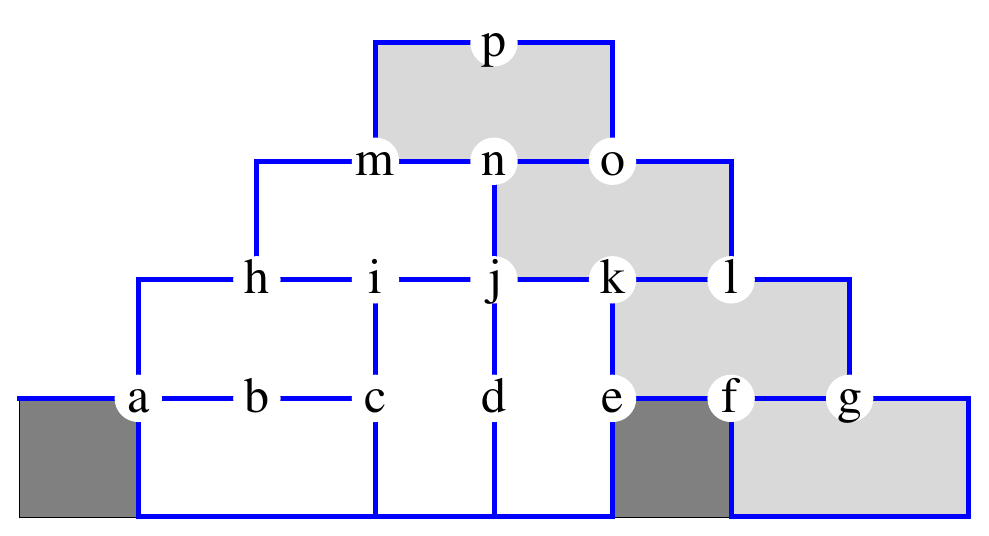}\end{center}
\caption{\label{halfaztec4}A half-aztec diamond of order $4$,
and one of the domino tilings contributing to $M_{13}$. In the corresponding monomial, each variable occurs with power $d-3$,
where $d$ is the local degree at the corresponding vertex in the domino tiling. In this example, 
the monomial is $\frac{aj}{bd}$. Note that the (light gray) tiles above the line of slope $-1$ containing the right removed square (dark gray)
are all horizontal; removing these, we have a tiling of a truncated order-$2$ diamond (the top half of an aztec diamond along with the top row of its bottom half; white in the figure).}
\end{figure}

\begin{proof} The proof uses a few facts about the combinatorics of 
Dodgson condensation, see e.g.~\cite{Spey2007}.
Recall how Dodgson condensation works. Define $m_{ij}^{(0)}=1$. 
Starting from an $n\times n$ array of numbers $(m^{(1)}_{ij})$ representing a matrix $M$.
Define an pyramidal array $m_{ij}^{(k)}$ where $i,j$ are integers for $k$ odd and half-integers for $k$ even,
by the (signed) octahedron recurrence
$$m^{(n+1)}_{i,j} = \frac{m^{(n)}_{i-1/2,j-1/2}m^{(n)}_{i+1/2,j+1/2}-m^{(n)}_{i-1/2,j+1/2}m_{i+1/2,j-1/2}^{(n)}}{m_{ij}^{(n-1)}}.$$ 
Here the defined values $m^{(k)}_{ij}$ form a pyramid called the 
\emph{Dodgson pyramid} of the matrix $M$. Its apex value is the determinant of $M$.

The (consecutive-index) principal minors of $M$ occur on a slice of the pyramid: the slice in the $x=y$ plane
(here we are thinking of the $x$-axis as the row coordinate and the $y$ axis as the column coordinate).
The almost principal minors occur on the parallel plane $x=y+1$. 

It follows from the above that the matrix $M$ associated to the standard network has entries which form the base of the Dodgson pyramid of $M$.

Typically the octahedron recurrence (for which Dodgson condensation is a special case)
is defined by taking initial data on the $z=0$ and $z=1$ planes, and working upwards.
We can, however, instead take our initial data on the $x=y$ and $x=y+1$ planes, and use the recurrence
to successively define values on planes $x-y=2,3,\dots$ and $-1,-2,\dots$. 
Because the entries of the Dodgson pyramid satisfy the \emph{signed} octahedron recurrence 
when going upward (increasing $z$),
they satisfy the \emph{unsigned} octahedron recurrence when going in these horizontal directions.

We can thus form the entries of $M$ using the octahedron recurrence 
(with $+$ signs) with initial data on the planes $x=y$ and $x=y+1$,
that is, with initial data consisting of the principal and almost principal 
minors of $M$. 

By a small generalization of a result of Speyer~\cite{Spey2007}, 
the entries on the plane $x=y+j$ are counted by domino tilings 
of truncated aztec diamonds: the entries on $z=1$ are defined by 
aztec diamonds truncated to remove all but the first row of the 
bottom half, as in the unshaded squares in Figure \ref{halfaztec4}; the entries on $z=j$ are counted 
by domino tilings of aztec diamonds from which the bottom
$n-j$ rows have been removed, that is, take the upper half and add 
the first $j$ rows of the bottom half.

To see this, extend the half-aztec diamond in the $x=y$ plane to a 
full aztec diamond, defining parameters $\eps^{-|z|}$ for vertices 
at negative $z$ values, where $\eps$ is small.  Now Speyer's 
bijection between the octahedron recurrence and tilings of the 
full aztec diamond shows that, in the limit $\eps\to 0$, the 
desired term is counting tilings of the aztec diamond in which
only horizontal dominos occur in all rows below $z=0$.  These 
are equivalent to tilings of the truncated aztec diamond.
\end{proof}


\section{Hermitian networks}

In this section we examine the image of various subsets of 
$M_n^* (\C)$ under the correspondence mapping matrices to
networks.  In particular, we describe the images of the
set of real symmetric matrices, the set of Hermitian matrices 
and the set of positive definite Hermitian matrices.
These descriptions not only parametrize the respective sets
but answer the question as to which collections of minors
are possible\footnote{Of course one answer is any Hermitian
collections of $1 \times 1$ minors and all the relations saying
that the larger minors are determinants of these.  We mean, however,
to characterize the possible vectors by identities and inequalities
involving small collections of minors.}.

\begin{definition} ~~\\[-4ex]
\begin{enumerate}[(i)]
\item A $2n$-gon network $(T,F)$ with entries in $\R$ or $\C$ is said to be
Hermitian if it satisfies the condition that $F(v)$ is real for all
vertices $v$ and for each face $f \in U(T)$ we have
\begin{equation} \label{eq:hermitian}
|F(f)|^2 = F(a) F(c) + F(b) F(d)
\end{equation}
where $a,b,c,d$ lists the vertices of $f$ in cyclic order.
\item A Hermitian network $(T,F)$ is said to be positive if 
for all vertices $v$, the sign of $F(v)$ is $\sigma (v)$.
\end{enumerate}
\end{definition}

The following result will be proved in Section~\ref{ss:hermitian corr}.
\begin{theorem} \label{th:hermitian}
The following are equivalent.
\begin{enumerate}[(i)]
\item The matrix $A \in M_n^* (\R)$ or $M_n^* (\C)$ is Hermitian; 
\item The network $(T , F_{A,T})$ is Hermitian for some $T$;
\item The network $(T , F_{A,T})$ is Hermitian for every $T$.
\end{enumerate}
\end{theorem}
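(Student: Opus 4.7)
The plan is to establish the cycle (i) $\Rightarrow$ (iii) $\Rightarrow$ (ii) $\Rightarrow$ (i). The first two implications are easy and the third is the substantive step.

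For (i) $\Rightarrow$ (iii), I fix any tiling $T$. The vertex condition holds because a principal submatrix of a Hermitian matrix is Hermitian, and so has real determinant. For the face condition on the rhombus associated to a rank-2 Boolean interval $[S, S \cup \{i,j\}]$, the Desnanot--Jacobi identity
\[
\det A_{S \cup \{i\}}^{S \cup \{j\}} \det A_{S \cup \{j\}}^{S \cup \{i\}} = \det A_{S \cup \{i\}}^{S \cup \{i\}} \det A_{S \cup \{j\}}^{S \cup \{j\}} - \det A_S^S \det A_{S \cup \{i,j\}}^{S \cup \{i,j\}},
\]
combined with $\det A_I^J = \overline{\det A_J^I}$ (valid when $A = A^*$), identifies the left-hand side with $|F(f)|^2$; short sign bookkeeping ($\sigma(a)\sigma(c) = -1$ across opposite corners of the rhombus and $\sigma(b) = \sigma(d)$ across the other two) matches the right-hand side with $F(a)F(c) + F(b)F(d)$. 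The implication (iii) $\Rightarrow$ (ii) is trivial.

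For (ii) $\Rightarrow$ (i), I would first observe that reading the Desnanot--Jacobi identity in reverse yields an equivalent reformulation: $(T, F_{A,T})$ is Hermitian if and only if $\det A_S^S \in \R$ at each vertex $S$ of $T$ and $\det A_J^I = \overline{\det A_I^J}$ at each face pair $(I,J)$ of $T$. In this minor-level form, invariance under a cube move becomes tractable — only one principal minor and three almost-principal pairs change, and the new Hermitian conditions follow from the old by further Desnanot--Jacobi identities on $A$. Iterating cube moves reduces the problem to $T = T_0$. On $T_0$ I apply the inductive reconstruction of Theorem~\ref{nettomtx} path by path, maintaining the hypothesis that after processing paths $1, \ldots, k{+}1$ the entries $A_{rs}$ with $|r - s| \leq k$ satisfy $A_{rs} = \overline{A_{sr}}$. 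The base case comes from path-$1$ vertices giving $A_{ii} \in \R$. At the inductive step, the path-$(k{+}1)$ vertex condition determines $A_{i,i+k}$ from the reality of $\det A_{\{i,\ldots,i+k\}}^{\{i,\ldots,i+k\}}$; expanding the face condition on the rhombus $R_{i,i+k}$ from path $k$ by multilinearity, the inductive hypothesis cancels all cross terms, leaving a single equation of the form $\det A_{\{i+1,\ldots,i+k-1\}}^{\{i+1,\ldots,i+k-1\}} \cdot (A_{i+k,i} - \overline{A_{i,i+k}}) = 0$; by genericity the contiguous principal minor on the left is nonzero, so $A_{i+k,i} = \overline{A_{i,i+k}}$.

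The main obstacle is the cube-move invariance in the second step. A direct verification using the hexahedron identities \eqref{hh1}--\eqref{hh4} is cumbersome; the minor-level reformulation I advocate above routes the problem through iterated Desnanot--Jacobi but still requires identifying, for each of the three new faces and the new vertex, the precise algebraic identity supplying its Hermitian condition. This is essentially the statement that the hexahedron relation restricts to the Kashaev relation on the Hermitian locus, as anticipated in the introduction.
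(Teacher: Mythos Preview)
Your argument is essentially correct and rests on the same substantive step as the paper: both approaches hinge on the fact that the Hermitian property of a network is preserved under cube moves, i.e., that the hexahedron relation restricts to the Kashaev relation on the Hermitian locus. The paper isolates this as Lemma~\ref{lem:equiv} and cites the algebraic check in~\cite{KP-ising}; you arrive at the same point in your discussion of ``cube-move invariance'' and correctly identify it as the main obstacle.

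Where the two arguments diverge is in the easy direction. Once cube-move invariance is in hand you effectively have $(ii)\Leftrightarrow(iii)$, and the paper then proves $(iii)\Rightarrow(i)$ in one line: for each pair $i\ne j$ choose a tiling $T$ having a rhombus $R_{ij}$ incident to $v_0$; at that face the Hermitian condition reads $A_{ij}A_{ji}=|A_{ji}|^2$, hence $A_{ij}=\overline{A_{ji}}$ by genericity. This avoids any induction entirely. Your route instead reduces to $T_0$ and runs an induction along the dotted paths. That induction is valid --- the face condition on $R_{i,i+k}$, rewritten via Desnanot--Jacobi and the inductive hypothesis, does collapse to $\det A_{\{i+1,\ldots,i+k-1\}}^{\{i+1,\ldots,i+k-1\}}\cdot\bigl(A_{i+k,i}-\overline{A_{i,i+k}}\bigr)=0$ --- but it is more work than needed, and your phrasing ``the path-$(k{+}1)$ vertex condition determines $A_{i,i+k}$'' is misleading: the entry $A_{i,i+k}$ is already given (we are testing the fixed matrix $A$), and it is the \emph{face} condition, not the vertex condition, that forces the conjugacy at each step. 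The vertex reality conditions enter only through the base case and through the reality of the inner principal minor appearing as the nonzero cofactor.

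In short: same hard lemma, different bookkeeping afterward. The paper's tiling-choice trick is worth knowing, as it replaces your entire $T_0$ induction with a single observation.
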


\subsection{Hermitian Kashaev relation}

Values $a_0,a_4,\dots, a_9, a_0^*$ on vertices and 
$a_1, a_2, a_3, a_1^*, a_2^*, a_3^*$ 
on faces of a cube (as in Figure~\ref{cubemove}), with $a_0,a_4,\dots, a_9$ 
real, are said to satisfy the \emph{Hermitian Kashaev relation} if
(\ref{eq:hermitian}) holds on every face and

\begin{eqnarray}\label{KashaevEQ1}
a_1^*&=&\frac{a_2a_3+\bar a_1a_7}{a_0} \label{k1} \\\label{KashaevEQ2}
a_2^*&=&\frac{a_3a_1+\bar a_2a_8}{a_0} \label{k2} \\\label{KashaevEQ3}
a_3^*&=&\frac{a_1a_2+\bar a_3a_9}{a_0} \label{k3} \\\label{KashaevEQ4}
a_0^*&=&\frac{a_0a_4a_7+a_0a_5a_8+a_0a_6a_9+2a_7a_8a_9+a_1a_2a_3+\overline{a_1a_2a_3}}{a_0^2}. \label{k4} 
\end{eqnarray}

\begin{lemma}\label{lem:equiv}
Let $a_0 , \ldots , a_9$ be complex numbers making the left-hand
diagram of Figure~\ref{corr2} satisfy the relation~\eqref{eq:hermitian}
on each face:
\begin{equation} \label{eq:hermitian face}
a_1\bar a_1= a_0a_4+a_8a_9,~~~a_2\bar a_2= a_0a_5+a_7a_9,
   ~~~a_3\bar a_3= a_0a_6+a_7a_8 \, .
\end{equation}
Then the values $a_0^*, a_1^*, a_2^*, a_3^*$ obtained from a cube move (the hexahedron relation)
satisfy the Hermitian Kashaev relations~\eqref{k1}--\eqref{k4}.  
Furthermore, the right-hand side will also satisfy~\eqref{eq:hermitian}
on each face.  It follows that any network equivalent to a 
Hermitian network is Hermitian and that the Hermitian Kashaev relations 
are a special case of the hexahedron relations under the constraint 
that~\eqref{eq:hermitian face} holds on any, hence every, network.
\end{lemma}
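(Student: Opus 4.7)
The plan is to split the verification into two parts. Part (i): once \eqref{eq:hermitian face} is imposed, the four hexahedron relations \eqref{hh1}--\eqref{hh4} reduce to the four Kashaev relations \eqref{k1}--\eqref{k4}. Part (ii): the three new peak-side faces, those incident to $a_0^*$, automatically inherit \eqref{eq:hermitian}. Part (ii) immediately delivers the closing statement, because in any larger network a cube move modifies only $(a_0;a_1,a_2,a_3)\mapsto(a_0^*;a_1^*,a_2^*,a_3^*)$, so the face identities at the untouched faces are preserved and only the three new face identities need to be checked.

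For Part (i), I would rewrite each face relation as $a_0a_4=|a_1|^2-a_8a_9$ and its two cyclic analogues. Substituting $a_0a_4a_7=|a_1|^2 a_7-a_7a_8a_9$ into \eqref{hh1} cancels the $a_7a_8a_9$ term and the right-hand side factors as $a_1(a_2a_3+\bar a_1 a_7)$; dividing by $a_0a_1$ yields \eqref{k1}, and \eqref{k2},\eqref{k3} follow by identical substitution. The key new ingredient in \eqref{hh4} is that the product $(a_8a_9+a_0a_4)(a_9a_7+a_0a_5)(a_7a_8+a_0a_6)$ becomes $|a_1|^2|a_2|^2|a_3|^2=(a_1a_2a_3)(\overline{a_1a_2a_3})$; substituting and dividing by $a_1a_2a_3$ converts \eqref{hh4} into \eqref{k4} term by term.

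For Part (ii), the top-down symmetry $a_0\leftrightarrow a_0^*$, $a_i\leftrightarrow a_i^*$, $a_4\leftrightarrow a_7$, $a_5\leftrightarrow a_8$, $a_6\leftrightarrow a_9$ of the cube tells us that the relation to be verified on the face labeled $a_1^*$ is $|a_1^*|^2=a_0^*a_7+a_5a_6$. Using the formula \eqref{k1} for $a_1^*$ and the reality of the vertex values,
\[ |a_1^*|^2 a_0^2 = (a_2a_3+\bar a_1 a_7)(\bar a_2\bar a_3+a_1 a_7) = |a_2|^2|a_3|^2 + a_7(a_1a_2a_3+\overline{a_1a_2a_3}) + |a_1|^2 a_7^2. \]
Multiplying \eqref{k4} by $a_7$ and adding $a_0^2 a_5a_6$ gives an expression whose $a_7(a_1a_2a_3+\overline{a_1a_2a_3})$ term cancels the matching term above, and what remains is the residual identity
\[ |a_2|^2|a_3|^2 + |a_1|^2 a_7^2 = a_0a_4a_7^2 + a_0a_5a_7a_8 + a_0a_6a_7a_9 + 2a_7^2a_8a_9 + a_0^2 a_5a_6, \]
which falls out by expanding $(a_0a_5+a_7a_9)(a_0a_6+a_7a_8)+(a_0a_4+a_8a_9)a_7^2$ via \eqref{eq:hermitian face}. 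The cyclic symmetry of the whole setup produces the analogous identities for $a_2^*$ and $a_3^*$.

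The main obstacle is not algebraic but combinatorial: one must pin down, with the correct labels and cyclic order, which four vertices bound each of the three peak-side faces, so that the involution under which \eqref{hh1}--\eqref{hh4} are symmetric matches the pairing of opposite diagonals in \eqref{eq:hermitian}. Once that identification is fixed, both Part (i) and Part (ii) reduce to forced polynomial identities, and the final sentence of the lemma---that a cube move preserves \eqref{eq:hermitian} on every face, so Hermiticity is preserved under equivalence and the Kashaev recurrence is the restriction of the hexahedron recurrence to the affine subvariety cut out by \eqref{eq:hermitian face}---is immediate.
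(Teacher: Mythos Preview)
Your proposal is correct and follows essentially the same route as the paper, which simply calls the lemma ``a simple algebraic check'' and refers to \cite[Section~7]{KP-ising} for the real case, noting that the same computation goes through verbatim when the face values are complex and the vertex values real. Your write-up in fact supplies the details the paper omits: the substitution $a_0a_4a_7=|a_1|^2a_7-a_7a_8a_9$ that collapses \eqref{hh1}--\eqref{hh3} into \eqref{k1}--\eqref{k3}, the factorization of the triple product in \eqref{hh4} as $|a_1a_2a_3|^2$, and the verification of $|a_1^*|^2=a_0^*a_7+a_5a_6$ on the peak faces via \eqref{k1} and \eqref{k4}.
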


\begin{proof}
This is a simple algebraic check: see the proof in~\cite[Section~7]{KP-ising} for real valued networks;
the same proof goes through for complex valued networks taking 
real values at the vertices.  
\end{proof}

\subsection{Hermitian correspondence} \label{ss:hermitian corr}

We begin with a proof of Theorem~\ref{th:hermitian}.  By 
Lemma~\ref{lem:equiv} $(ii)$ and $(iii)$ are equivalent.
It remains to prove that $(iii) \Rightarrow (i) \Rightarrow (ii)$.

\noindent{$(i) \Rightarrow (ii)$}:  Let $A$ be any Hermitian
matrix and let $(T_0 , F) = (T_0 , F_{T_0,A})$ be the standard
network associated with $A$.  It is immediate that $F(v)$ is
real for all vertices $v \in U(T_0)$ because principal minors
of Hermitian matrices are real.  To check~\eqref{eq:hermitian},
let $f$ be a face of $T_0$ with vertices $v, v+e_i, v+e_i+e_j$
and $v+e_j$.  Let $S \subseteq [n]$ corresond to $v$.  The values
of $F$ at the vertices of $f$ are respectively (using the notation
$M_i^j := M_{S \cup \{ i \}}^{S \cup \{ j \}}$, etc.),
$$\sigma (v) \det M\, , \;\; \sigma (v+e_i) \det M_i^i\, , \;\; 
   \sigma (v+e_i+e_j) M_{ij}^{ij}\, , \;\; \sigma (v+e_j) M_j^j \, ,$$
while the face value is $F(f) = \sigma (v+e_i) M_j^i$.  Observe that
the signs satisfy $\sigma (v) \sigma (v+e_i+e_j) = -1$.  
Dodgson's condensation \cite{Dodg1866} states that 
$$\det M_{ij}^{ij} \det M = \det M_i^i \det M_j^j - \det M_j^i \det M_i^j.$$
Because $M$ is Hermitian $M_i^j = \overline{M_j^i}$. 
Thus we have
\begin{equation} \label{TT}
-F(v + e_1 + e_2) F(v) = F(v + e_1) F(v + e_2)-|F(f)|^2
\end{equation}
which means the network is Hermitian.  

\noindent{$(iii) \Rightarrow (i)$}: Let $T$ be any tiling and  
let $f$ be a face incident to $v_0$ with sides parallel to 
$e_i$ and $e_j$.  The values of $F$ on vertices of $f$ are
$$1, A_{ii}, A_{ii} A_{jj} - A_{ij} A_{ji} , A_{jj}$$
while $F(f) = A_{ji}$.  If $(T,F_{A,T})$ is Hermitian then 
applying~\eqref{eq:hermitian} at $f$ gives
$$A_{ij} A_{ji} = |A_{ji}|^2 \, .$$
This means that $A_{ij} = \overline{A_{ji}}$.  For every $i \neq j$
there is at least one tiling $T$ having such a face $f$.  We conclude
that if each $(T , F_{A,T})$ is Hermitian then so is $A$.
$\noproof$

Fixing a tiling $T$ and assigning values of $F$ on $U(T)$ arbitrarily
(but generically) exactly parametrizes generic $n \times n$ matrices.
In the Hermitian case, the same is true if one restricts to Hermitian
networks $(T,F)$; however we would like a more explicit parametriation
of this subset of networks.

\begin{prop}\label{hermitianparam}
Generic Hermitian $n \times n$ matrices are parameterized by their
diagonal entries and contiguous almost-principal minors.
\end{prop}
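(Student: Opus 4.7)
The plan is to establish the parameterization as a bijection by building the standard network directly from the prescribed data. A generic Hermitian $n\times n$ matrix carries $n^2$ real degrees of freedom, matching the $n + 2\binom{n}{2}$ real parameters in the proposed data. Under the correspondence of Theorem~\ref{mtxnetthm}, the diagonal entries of $A$ coincide with the values of $F_{A,T_0}$ at the distance-$1$ vertices of $T_0$, and the contiguous almost-principal minors coincide (via~\eqref{eq:F}) with its face values, so the parameter map is well defined and, by Theorem~\ref{nettomtx}, injective.

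For surjectivity, given prescribed real diagonals $d_1,\dots,d_n$ and arbitrary complex face values on $T_0$, I would build a Hermitian standard network $(T_0,F)$ by setting $F(v_0)=1$, $F(\{i\})=d_i$, assigning the prescribed face values, and then propagating outward from $v_0$ using the Hermitian relation~\eqref{eq:hermitian}. Each vertex $\{i,\dots,i+k\}$ at distance $k+1$ is the top of a unique rhombus $R_{i,i+k}$ of $T_0$, with bottom $\{i+1,\dots,i+k-1\}$ (distance $k-1$) and sides $\{i,\dots,i+k-1\}$, $\{i+1,\dots,i+k\}$ (distance $k$), so~\eqref{eq:hermitian} on this face determines the top:
\[
F(v_{\text{top}}) \;=\; \frac{|F(f)|^2 - F(v_{\text{side}_1})\,F(v_{\text{side}_2})}{F(v_{\text{bottom}})}.
\]
The numerator is real (since $|F(f)|^2$ is real and the two side values are real by induction) and the denominator is real, so $F(v_{\text{top}})$ is real. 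On an open dense set of parameter data all assigned values are nonzero, yielding a generic Hermitian network.

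Once $(T_0,F)$ is built, Theorem~\ref{nettomtx} produces a unique $A\in M_n^*(\C)$ with $F_{A,T_0}=F$. Lemma~\ref{lem:equiv} spreads Hermiticity across the whole equivalence class, and the implication $(iii)\Rightarrow(i)$ of Theorem~\ref{th:hermitian} yields that $A$ is Hermitian, with the prescribed data as its diagonal entries and contiguous almost-principal minors. The key point to verify is the combinatorial lemma that in the standard tiling each contiguous subset of size $\ge 2$ is the top of a unique rhombus; this is what makes the inductive recovery of vertex values unambiguous and is really the only nontrivial aspect of the argument, everything else (reality of vertex values, non-vanishing on the generic locus, application of the earlier theorems) being routine.
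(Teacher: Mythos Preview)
Your proof is correct and follows essentially the same route as the paper's: both reduce to showing that a Hermitian standard network is determined by its face values together with the values at the distance-$1$ vertices, via propagation of the relation~\eqref{eq:hermitian}, and your combinatorial lemma that each contiguous subset of size $\geq 2$ is the top of a unique rhombus of $T_0$ is exactly the paper's observation that the rhombi not adjacent to $v_0$ can be ordered so that each new rhombus has only one new vertex. You spell out the surjectivity construction and the reality check more explicitly than the paper does, but there is no substantive difference in approach.
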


\begin{proof}
We have seen that generic Hermitian matrices are parametrized
by standard networks $(T_0 , F)$ satisfying the Hermitian 
condition~\eqref{eq:hermitian}.  It remains only to observe
that these networks are parametrized by the face variables
$\{ F(f) \}$ together with the vertex variables $\{ F(e_j) :
1 \leq j \leq n \}$ along the lowest dotted path.  (To see this
note that the rhombi not adjacent to $v_0$ can be ordered so that 
each new rhombus has only one vertex not in the union of the previous
rhombi.)  If $A$ is the matrix corresonding to the network then
the values $F(e_j)$ are the diagonal elements of $A$ and the
face variables are the continguous minors 
$M_{i, \ldots , j}^{i\pm1 , \ldots , j\pm1}$
where the sign choice in the $\pm$ is determined by the parity
of $j-i$ but does not matter because the the two minors are
conjugates of each other.
\end{proof}

There are in fact many other choices of parameters. Take any shortest path $\gamma$ in the tiling from $v_0$ to the opposite vertex.
We claim that the variables on the vertices of $\gamma$, along with all face variables, parameterize all networks. To see this it suffices
to show that on either side of $\gamma$, if there is a tile (that is, if $\gamma$ is not the boundary path) there is a tile
having two consecutive sides, and thus three vertices, touching $\gamma$. The value at the fourth vertex is then a function of its
face value and the values at the vertices along the path; pushing the path across this tile and continuing, we see that 
all vertex values are obtained in this way. To find such a tile to the left (say) of $\gamma$, take any tile left of $\gamma$ 
and follow its train tracks (continguous tiles sharing a set of parallel edges) until they cross $\gamma$; take any new tile in the triangular region delimited by $\gamma$ and these two
train tracks. $\gamma$ with the train tracks of this new tile forms a strictly smaller triangular region. Conclude by induction.

Another interesting representation of the generic Hermitian matrix
is the following Laurent parametrization, where the initial conditions
are taken to be an arbitrary network.

\begin{proposition}\label{hermitianlaurent}
The matrix entries for a standard Hermitian network are Laurent polynomials 
in the interior entries (face values and interior vertex values).
\end{proposition}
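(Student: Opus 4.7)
My plan is to derive Proposition~\ref{hermitianlaurent} from the explicit Laurent description in Theorem~\ref{th:laurent}. That theorem writes each matrix entry as a Laurent polynomial in the standard network variables, with monomials indexed by domino tilings of a modified half-aztec diamond and each variable appearing with exponent $d_v-3$, where $d_v$ is the local degree of the corresponding vertex in the tiling. The new content to establish is that the only variables which can ever carry $d_v=2$ -- and hence contribute to a denominator -- correspond to \emph{interior} entries of the standard tiling $T_0$, namely the face values and the strictly interior vertex values.

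The heart of the argument will be a combinatorial check using the description in Figure~\ref{halfaztec4}. Under the natural identification of vertices of the (modified) half-aztec diamond with the vertices and faces of $T_0$, the boundary-of-$P_n$ vertices of $T_0$ sit at extremal positions of the diamond -- corners of the truncated region and the outermost vertices of each row. At such positions a short local inspection shows that every domino tiling forces at least three dominoes to meet, so $d_v\ge 3$, since no domino can extend past the diamond boundary. The interior vertices of $P_n$ and the face centers of $T_0$ are the only sites that accommodate $d_v=2$ configurations and hence the only variables that may appear in denominators. This is consistent with the explicit $n=4$ formulas given after Theorem~\ref{nettomtx}, in which only the variables $b,c,d,e,f,i,j,k,n$ (faces and interior vertices) ever appear in the denominator.

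The Hermitian hypothesis will enter only to guarantee that the setup is well-posed: by Theorem~\ref{th:hermitian} every standard Hermitian network is the network of a genuine Hermitian matrix, and by Lemma~\ref{lem:equiv} the governing Hermitian Kashaev relations~\eqref{k1}--\eqref{k4} have their only denominator at the central vertex value $a_0$, which is always interior to the cube being moved. This ensures the Laurent-in-interior denominator structure is preserved by the equivalence moves among Hermitian networks, so the proposition does not depend on the particular sequence of cube moves used. The principal obstacle is the combinatorial identification in the second paragraph -- verifying uniformly in $n$ that each extremal vertex of the modified half-aztec diamond has $d_v\ge 3$ in every tiling -- which I expect to follow by a brief row-by-row case analysis at the corners, exploiting the inductive truncation structure of the diamond highlighted in Figure~\ref{halfaztec4}.
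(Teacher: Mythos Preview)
Your route is not the paper's. The paper never touches the domino combinatorics of Theorem~\ref{th:laurent}; it reruns the cofactor induction of Theorem~\ref{nettomtx}. Working outward from the diagonal, each $M_{ij}$ with $i<j$ is read off from the face equation $\det M_{\{i,\dots,j-1\}}^{\{i+1,\dots,j\}}=c$; the coefficient of $M_{ij}$ in this expansion is the principal minor on $\{i+1,\dots,j-1\}$, which is an \emph{interior} vertex parameter, so the only division is by that. The Hermitian hypothesis is used substantively, not just for ``well-posedness'': once the $M_{ij}$ with $i<j$ are found this way, the subdiagonal half is obtained for free by $M_{ji}=\overline{M_{ij}}$. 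If your approach worked, it would in fact prove the stronger, non-Hermitian statement, since Theorem~\ref{th:laurent} already handles all entries at once.

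The gap is your central combinatorial claim. You want the half-aztec-diamond positions carrying boundary-of-$P_n$ labels to satisfy $d_v\ge 3$ in every tiling, and you argue this from their being ``extremal positions---corners of the truncated region and the outermost vertices of each row.'' That heuristic points in the wrong direction: at convex corners and along straight boundary edges a lattice point meets only one or two unit squares, so $d_v\le 2$ there, which would force a \emph{negative} exponent. What actually keeps the boundary labels out of denominators (and is visible in the $n=4$ formulas) is the freezing near the boundary built into the proof of Theorem~\ref{th:laurent}---the $\eps\to 0$ device pins the tiling to a brick-wall pattern in those zones, where the local degree is exactly $3$. Turning this into an argument means pinning down precisely which lattice points carry which network labels and checking, uniformly in $n$ and for every placement of the two removed squares, that the frozen region covers all boundary-of-$P_n$ labels. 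That is real work your ``brief row-by-row case analysis'' has not yet done. The paper's one-line cofactor observation avoids this bookkeeping entirely.
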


\begin{proof} This follows from the essentially same argument as in the 
proof of Theorem \ref{nettomtx}.  We work outwards from the diagonal.
Inductively, each new entry $M_{ij}$ with $i<j$ is defined by an equation
$\det M_A^B = c$ where $M_A^B$ is an odd almost-principal minor.  This 
is a multilinear linear equation in which $M_{ij}$ is the only unassigned
variable; moreover the coefficient of $M_{ij}$  is a principal minor. 
Thus $M_{ij}$ is a Laurent polynomial, which is an actual polynomial 
in the (previously assigned) other matrix entries, with a denominator 
which is the parameter assigned to a principal minor, which is an interior 
vertex.
Finally we can define $M_{ji}=\overline{M_{ij}}.$
\end{proof}

\begin{example}
For the standard tiling, the $4 \times 4$ example is easy to compute.
The matrix $M$ is given in terms of the face and interior vertex variables
as follows.
\begin{equation} \label{eq:M}
M=\begin{pmatrix}
a&\bar x&\frac{\bar x\bar y+u}{b}&\frac{\overline{xyz}}{bc}+\frac{\bar z u}{bc}+\frac{\bar xv}{bc}+\frac{yuv}{bcf}+\frac{\bar w}{f}\\
x&b&\bar y&\frac{\overline{yz}+v}{c}\\
\frac{xy+\bar u}{b}&y&c&\bar z\\
\frac{xyz}{bc}+\frac{z\bar u}{bc}+\frac{x\bar v}{bc}+\frac{\overline{yuv}}{bcf}+\frac{w}{f}&\frac{yz+\bar v}{c}&z&d
\end{pmatrix}.
\end{equation}
\end{example}


Examination of the matrix entries in~\eqref{eq:M} leads to the following
conjecture, verified in the $5 \times 5$ case as well.

\begin{conjecture}
The matrix entries for a standard Hermitian network are Laurent 
polynomials, with coefficient $1$, whose numerators are monomials 
in the face variables and denominators are monomials in the 
interior vertex variables.  The terms are in bijection with 
Catalan paths on the dual network.  The purported bijection 
is illustrated in Figure \ref{catalan}. 
\end{conjecture}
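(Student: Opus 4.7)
The plan is to bootstrap from Proposition~\ref{hermitianlaurent} and the Dodgson/Speyer framework used in the proof of Theorem~\ref{th:laurent}, specialized by the Hermitian constraint~\eqref{eq:hermitian}. In the generic case, matrix entries were realized as partition functions of domino tilings of half-aztec diamonds, equivalently of Schr\"oder paths with step set $\{(1,0),(0,1),(1,1)\}$. The Hermitian identity $|F(f)|^2 = F(a)F(c)+F(b)F(d)$ relates the square-modulus of a face variable to the surrounding vertex variables, and this is exactly the algebraic identity one expects to use to rewrite the Schr\"oder enumeration as a Catalan enumeration, since Catalan paths arise from Schr\"oder paths by ``folding'' each diagonal step $(1,1)$ into a pair of axial steps $(1,0),(0,1)$ weighted by a quadratic in the face variable.

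First, I would establish the Laurent form structurally. Running the induction in the proof of Proposition~\ref{hermitianlaurent} outward from the diagonal, each new entry $M_{ij}$ (with $i<j$) satisfies a linear equation $\det M_A^B=c$ in which the unassigned coefficient is an interior principal minor and $c$ is a face variable. Solving this equation expresses $M_{ij}$ as a sum of monomials whose denominators are products of interior vertex values and whose numerators are products of face values together with previously computed entries. Expanding the previously computed entries recursively, and using $M_{ji}=\overline{M_{ij}}$ to handle the lower triangle, yields a Laurent expression in the claimed variables; the more delicate claim is that numerators are pure monomials in face variables (with no residual vertex factors) and all coefficients are $1$.

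Second, to get coefficients equal to $1$ and the bijection with Catalan paths, I would model the recursion as a transfer operator on paths in the dual network, following Speyer's strategy. Use the Hermitian Kashaev relations~\eqref{k1}--\eqref{k4} as the local move: each application of~\eqref{k4} introduces a pair $a_1a_2a_3+\overline{a_1a_2a_3}$ and each of~\eqref{k1}--\eqref{k3} gives a two-term numerator whose shape matches the two local extensions of a Catalan path at a corner. Matching these local moves to the two Catalan step types (north and east, with diagonal $(1,1)$ steps forbidden by~\eqref{eq:hermitian}) should give an injection from Catalan paths on the dual network into the monomial support. Verifying equality of cardinalities in small cases (the paper already checks $n=4,5$) plus a dimension count against the generating function of Catalan paths would then close the bijection. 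To rule out accidental cancellations and coefficients other than $1$, I would argue by induction along the dotted paths of Figure~\ref{4genexample}, showing each Hermitian-Dodgson step corresponds to prepending exactly one step to a Catalan path on the dual network, with weight equal to the unique face variable crossed.

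The main obstacle is pinning down the precise bijection indicated in Figure~\ref{catalan}: one must identify the correct ``dual network'' structure (likely the graph whose vertices are faces of $T_0$ and whose edges record Hermitian-adjacency) and verify that the local substitution rule induced by~\eqref{k1}--\eqref{k4} really matches the transfer matrix for Catalan paths rather than some refinement thereof. A secondary subtlety is the conjugation: face variables appear as both $F(f)$ and $\overline{F(f)}$ in different matrix entries, and the bijection must track orientation of the Catalan path to distinguish these. Once the local substitution rule is verified and sign/orientation data are pinned down, the global statement follows by induction on the distance from $v_0$ in the standard tiling, exactly as in the proof of Theorem~\ref{nettomtx}.
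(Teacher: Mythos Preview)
The statement you are attempting to prove is labeled a \emph{Conjecture} in the paper; the authors offer no proof and only report numerical verification for $n=4$ and $n=5$. There is therefore no proof in the paper to compare your proposal against.

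As to the proposal itself: it is a strategy outline rather than a proof, and you acknowledge as much when you write that ``the main obstacle is pinning down the precise bijection.'' Two more specific concerns are worth flagging. First, the Hermitian Laurent polynomial of Proposition~\ref{hermitianlaurent} is \emph{not} obtained from the generic Laurent polynomial of Theorem~\ref{th:laurent} by substituting the constraint~\eqref{eq:hermitian}; the two are produced by different recursions (the Hermitian one uses $M_{ji}=\overline{M_{ij}}$ at each step and has a different, smaller, variable set). So your plan to ``fold'' the Schr\"oder enumeration down to a Catalan enumeration via~\eqref{eq:hermitian} does not operate on the object you want to analyze; you would first need to show that the Hermitian recursion and the generic recursion followed by algebraic reduction under~\eqref{eq:hermitian} yield the same formal Laurent polynomial, which is not obvious since~\eqref{eq:hermitian} is a relation among the variables, not a substitution rule. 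Second, the sentence ``verifying equality of cardinalities in small cases \dots\ plus a dimension count \dots\ would then close the bijection'' is not a valid argument for general $n$: matching counts for $n\le 5$ and a generating-function identity do not establish a term-by-term bijection, nor do they rule out cancellation or coefficients other than $1$.

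If you want to push this further, the more promising route is the one you sketch last: set up the Hermitian recursion directly as a transfer operator on lattice paths in the dual graph, and prove by induction on distance from $v_0$ that each step of the recursion corresponds to extending a Catalan path by one step with the prescribed weight. That would require making the dual-graph model and the weight rule of Figure~\ref{catalan} completely explicit and checking that the two-term numerators in the Hermitian Dodgson step match the two local Catalan extensions, including the conjugation bookkeeping. Until that local lemma is written down and verified, the conjecture remains open.
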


\begin{figure}[ht!]
\begin{center}\includegraphics[width=2.5in]{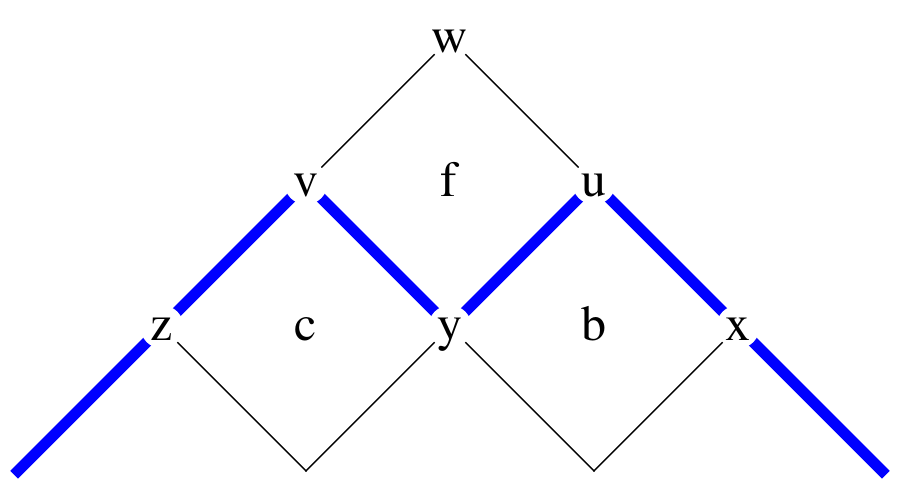}\end{center}
\caption{\label{catalan}This grid is the dual graph to the tiling. To a Catalan path on this grid, record the variables at each local max and min.
The vertex variables here (which are face variables of the network) go in the numerator; the face variables in the denominator.
The given path has weight $\frac{yuv}{bcf}$.}
\end{figure}

\subsection{Positive $2n$-gon networks}

Recall that a positive $2n$-gon network is a Hermitian network 
with the additional constraint that the sign of $F(v)$ is $\sigma(v)$.

\begin{theorem}\label{positive} The network associated to a 
positive definite Hermitian matrix is positive.  Conversely, 
a positive network gives rise to a positive-definite Hermitian matrix.
\end{theorem}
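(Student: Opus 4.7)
The plan is to reduce both implications to Sylvester's criterion via the sign identity $F_{A,T}(v)=\sigma(v)\det A_S^S$ relating vertex values of the network to signed principal minors of $A$. The crucial geometric input is that every tiling of $P_n$ by $\mathcal{W}_n$ shares the boundary of $P_n$, so the chain of right-boundary vertices $v_k := v_0+e_1+\cdots+e_k$ for $k=0,1,\ldots,n$ lies in every tiling and corresponds to the subsets $\emptyset,\{1\},\{1,2\},\ldots,[n]$. These vertices are precisely the ones whose $F$-values encode the leading principal minors of $A$.

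For the forward direction I would start with $A$ positive definite Hermitian, invoke the standard fact that every principal minor of such a matrix is positive, and read off from the sign identity that $\mathrm{sign}(F_{A,T}(v)) = \sigma(v)$ at every vertex of every tiling. Hence $(T, F_{A,T})$ is a positive network for every $T$.

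For the converse I would start with a positive network $(T,F)$. Theorem~\ref{th:hermitian} already yields that the associated matrix $A$ is Hermitian. Specializing to the boundary vertices $v_k$, which must belong to $T$, the identity $F(v_k)=\sigma(v_k)\det A_{\{1,\ldots,k\}}^{\{1,\ldots,k\}}$ combined with the positivity hypothesis $\mathrm{sign}(F(v_k))=\sigma(v_k)$ produces $\det A_{\{1,\ldots,k\}}^{\{1,\ldots,k\}}>0$ for all $k=1,\ldots,n$. Sylvester's criterion then concludes that $A$ is positive definite. The argument is short because the sign convention in the definitions of $\sigma$ and of the network $F_{A,T}$ was designed exactly to match the sign pattern of principal minors of positive definite Hermitian matrices; the only step that is not bookkeeping is the appeal to Sylvester's criterion, so I do not foresee a serious obstacle.
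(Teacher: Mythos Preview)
Your proposal is correct and follows essentially the same route as the paper: for the forward direction you use that all principal minors of a positive definite Hermitian matrix are positive, and for the converse you read off the leading principal minors from the right-boundary vertices (which belong to every tiling) and apply Sylvester's criterion. The only difference is cosmetic---you make explicit the appeal to Theorem~\ref{th:hermitian} to ensure $A$ is Hermitian, whereas the paper leaves this implicit in the definition of a positive network.
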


\begin{proof} Suppose a network is positive.  Under the 
network-matrix correspondence, the values along the right-hand 
boundary of the network are $\sigma(v)$ times the leading 
(upper-left principal) minors of the matrix.  Sylvester's 
criterion~\cite[Theorem~7.2.5]{horn-johnson} states that 
positivity of the leading minors of a Hermitian matrix 
(the first $k$ rows and columns, $1 \leq k \leq n$)
is equivalent to positive definiteness of the matrix. 
Thus a positive network gives rise to a positive definite matrix.

Conversely, if a matrix is positive definite, all its principal minors 
are positive and therefore the network is positive.
\end{proof}

How are positive networks parametrized?  On the standard network, 
do this as follows.  We assign values on $e_i$ arbitrarily and 
positively.  Then on $e_i+e_{i+1}$ we assign any negative values 
larger than $-F(e_i)F(e_{i+1})$, so that $F(e_i)F(e_{i+1})+F(e_i+e_{i+1})>0$.
Then on $e_i+e_{i+1}+e_{i+2}$ assign any negative value larger than 
$$F(e_i+e_{i+1})F(e_{i+1}+e_{i+2})/F(e_{i+1}).$$ 
Then on $e_i+e_{i+1}+e_{i+2}+e_{i+3}$ assign any positive value 
smaller than 
$$-F(e_i+e_{i+1}+e_{i+2})F(e_{i+1}+e_{i+2}+e_{i+3})/F(e_{i+1}+e_{i+2}),$$ 
and so on.
In each case except for the initial $e_i$ we have a bounded positive 
length open interval to choose from.

Once the vertex values have been chosen, the face values are determined 
up to a unit real or complex number.  For $\R$ there are 2 choices 
of sign for each face value.  Thus the space of positive networks 
with nonzero face values is homeomorphic to a union of $2^{n(n-1)/2}$ 
open balls each of dimension $\binom{n+1}{2}$.  For $\C$ the argument
of each face value can be chosen freely so the space of positive
Hermitian networks is homeomorphic to the product of a
$\binom{n}{2}$-torus with a $\binom{n+1}{2}$-ball (equivalently, $(\C^*)^{\binom{n}{2}} \times \C^n$).

\section{The $q$-Hermitian case}\label{quatsection}

A $q$-Hermitian matrix is a matrix of quaternions which satisfies 
$M_{ij}=(M_{ji})^*$, where $*$ denotes the quaternionic conjugate. 
The $q$-determinant of a $q$-Hermitian matrix is a real number defined by 
\be\label{qdetdef}\text{qdet} M = \sum_{\text{cycle decomps}} (-1)^{c+n}\tr M_{C_1}\tr M_{C_2}\dots\tr M_{C_k}\ee
where the sum is over cycle decompositions of $[n]$ (disregarding order), $c$ is the number of cycles, and
$\tr M_{C}$ is the trace of the product of entries in cycle $C$ (one-half the trace for cycles of length $1$ or $2$). 

For example when $a,b,c$ are real, 
$$\qdet\begin{pmatrix}a&d&e\\d^*&b&f\\e^*&f^*&c\end{pmatrix}=abc-aff^*-bee^*-cdd^*+\text{Tr}(dfe^*).$$

Dyson~\cite{dyso1970} showed that
$\text{qdet} M = \text{Pf}(Z\tilde M)$, where $Z$ is the block-diagonal 
matrix with $2\times 2$ blocks $\begin{pmatrix}0&1\\-1&0\end{pmatrix}$ 
and $\tilde M$ is the $2n\times 2n$ matrix obtained 
from $M$ by replacing each entry $M_{ij}=a+bi+cj+dk$ 
with the $2\times 2$ block $\begin{pmatrix}a+ib&c+id\\-c+id&a-ib\end{pmatrix}$. 

A $q$-Hermitian matrix is \emph{positive definite} if its leading 
minors are all positive; equivalently if its eigenvalues are 
positive~\cite{kassel-thesis}.

A $q$-Hermitian network is a network with face values in $\HH$; vertex values are real.
In each face with vertex values $a,b,c,d$ the face value $z$ satisfies $zz^*=ac+bd$. 

In the case of a $\HH$-valued Hermitian matrix, almost principal minors $\qdet M_{S\cup\{j\}}^{S\cup\{i\}}$
can also be defined, as follows \cite{dyso1970}. Instead of summing over cycle decompositions as in (\ref{qdetdef}),  one sums over decompositions
of the indices into configurations forming a path from $i$ to $j$ with the remaining indices formed into cycles. 
The contribution for a configuration is the product of traces over the cycles and
the product of the quaternions along the path.  With this definition we can define as above a $q$-Hermitian network 
associated to a $q$-Hermitian matrix
(whereas for a general matrix over $\HH$ no such definition can be made.)

\begin{theorem}\label{th:quat}
The Kashaev relation (\ref{KashaevEQ1}-\ref{KashaevEQ4}) holds when $a_1,a_2,a_3$ are quaternions (and $a_0,a_4,\dots,a_9$ are real),
for the given order of multiplication.
Theorem \ref{th:hermitian}, Propositions \ref{hermitianparam}, \ref{hermitianlaurent} and Theorem \ref{positive} hold for $q$-Hermitian matrices.
\end{theorem}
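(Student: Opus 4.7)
The strategy is to exploit Dyson's identity $\qdet M = \text{Pf}(Z\tilde{M})$ to convert $q$-determinantal identities into Pfaffian identities on the $2n\times 2n$ complex matrix $Z\tilde{M}$, which is skew-symmetric whenever $M$ is $q$-Hermitian. Under this embedding, principal $q$-minors become Pfaffians of even-sized principal submatrices, and the almost principal $q$-minors (defined via Dyson's path-cycle expansion) become Pfaffians of ``shifted'' submatrices. Every identity we need thereby reduces to a standard Pfaffian identity, which holds over any commutative ring containing the entries of $Z\tilde{M}$.

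First I would verify the quaternionic Kashaev relations (\ref{k1})--(\ref{k4}). Since $a_0, a_4,\ldots,a_9$ are real and commute with everything, only the products $a_2 a_3$, $a_3 a_1$, $a_1 a_2$, and $a_1 a_2 a_3$ are sensitive to ordering, and the combination $a_1 a_2 a_3 + \overline{a_1 a_2 a_3} = 2\,\mathrm{Re}(a_1 a_2 a_3)$ appearing in (\ref{k4}) is manifestly real, so $a_0^*$ is real as required. To verify the identities themselves, write each $a_i = \alpha_i + \beta_i\mathbf{i} + \gamma_i\mathbf{j} + \delta_i\mathbf{k}$ and reduce using the three face relations $|a_i|^2 = a_0 a_{3+i} + a_{\ast} a_{\ast\ast}$ on the free quaternion algebra; alternatively, apply Dyson's embedding and lift the hexahedron relation to a Pfaffian identity on a rank-$3$ Boolean interval of $Z\tilde{M}$. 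Since Lemma \ref{lem:equiv} settles the complex Hermitian specialization, this Pfaffian identity is the unique lift compatible with it, which then fixes the non-commutative ordering.

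For the remaining assertions, the central tool is the $q$-Dodgson identity
\[
\qdet M_{ij}^{ij}\cdot\qdet M \;=\; \qdet M_i^i\cdot\qdet M_j^j \;-\; \qdet M_i^j\cdot\qdet M_j^i,
\]
which I derive as the image of the standard Pfaffian Jacobi relation under the Dyson correspondence. Because $M_{ij} = M_{ji}^{*}$, a direct inspection of the path-cycle definition gives $\qdet M_i^j = (\qdet M_j^i)^{*}$, so the cross term equals $|\qdet M_j^i|^2$. This is exactly the face relation (\ref{eq:hermitian}) required by a Hermitian network, and the proof of Theorem \ref{th:hermitian} transfers word for word. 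Propositions \ref{hermitianparam} and \ref{hermitianlaurent} then follow from the same inductive argument: each unassigned entry $M_{ij}$ appears linearly in its defining $q$-minor equation with a real (principal $q$-minor) coefficient, so the Laurent recursion proceeds unobstructed, with $M_{ji}$ recovered as $M_{ij}^{*}$. Theorem \ref{positive} uses Sylvester's criterion for $q$-Hermitian positive definiteness (cited in the statement), after which the network-positivity characterization follows from the same boundary-minor argument as in the complex case.

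The principal obstacle is the $q$-Dodgson identity, and specifically the sign and conjugation bookkeeping: one must verify that, under Dyson's embedding, the Pfaffian corresponding to $\qdet M_i^j$ is a complex number whose modulus squared equals $\qdet M_i^j\cdot\qdet M_j^i$, and that the overall sign in the Jacobi identity agrees with the one recorded in (\ref{eq:hermitian}). Once this compatibility is pinned down, the entire quaternionic theory becomes a formal transcription of the complex Hermitian theory developed in the preceding section.
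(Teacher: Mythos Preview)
The paper's proof is minimal: it treats the quaternionic Kashaev relations as a direct algebraic verification (``a short check'') with the prescribed multiplication order, infers the quaternionic analogue of Lemma~\ref{lem:equiv} by the same argument, and then asserts that the proofs of Theorem~\ref{th:hermitian} and Propositions~\ref{hermitianparam}, \ref{hermitianlaurent} carry over verbatim; Theorem~\ref{positive} follows by citing Sylvester's criterion for $q$-Hermitian matrices from~\cite{alesker}.

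Your Pfaffian route via Dyson's identity is a genuinely different strategy. It is more systematic in spirit---reducing $q$-determinant identities to classical Pfaffian identities over a commutative ring---but considerably more elaborate, and some of the steps you sketch are not solid as written. The main issue is your treatment of the almost-principal $q$-minors: these are quaternion-valued, so under the Dyson embedding they correspond to $2\times 2$ complex blocks, not to single Pfaffians of ``shifted'' skew-symmetric submatrices (a Pfaffian needs a skew-symmetric even-sized matrix and returns a scalar). Extracting the $q$-Dodgson identity from a Pfaffian Jacobi relation therefore requires identifying the right Pfaffian whose square is $|\qdet M_i^j|^2$, and you have not done this. Similarly, your ``unique lift compatible with the complex specialization'' argument for fixing the multiplication order in~(\ref{k1})--(\ref{k3}) is not a proof: there is no a priori uniqueness principle here, and the ordering must be checked directly.

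In short, the Pfaffian approach can probably be made to work, and if carried out it would give a conceptual explanation for why the quaternionic identities hold; but the paper simply does the few lines of non-commutative algebra by hand and avoids these bookkeeping problems entirely.
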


\begin{proof}
The first statement is a short check. This implies Lemma \ref{lem:equiv} via the same proof. Theorem \ref{th:hermitian}, Propositions \ref{hermitianparam} and \ref{hermitianlaurent}
then follow.
Sylvester's criterion also holds for $q$-Hermitian matrices, see 
\cite{alesker}, and thus Theorem \ref{positive} holds as well.
\end{proof}

\section*{Acknowledgements}

We are grateful to an anonymous referee for prompting us to formulate
Theorem~\ref{th:ideal} in precise terminology from algebraic geometry.
We are indebted to Bernd Sturmfels for considerable help in carrying
this out, and to Sergey Fomin and Frank Sottile for further useful
conversations.

\old{
\section{Dimers}\label{dimers}

There is a formula for the Laurent expansion of the elements of the matrix associated to a network with an arbitrary tiling,
generalizing the formula of Corollary \ref{aztec}.

\begin{theorem}
Let $H$ be the planar dual of the tiling $T$. Replace each vertex of $H$ with a small square as in Figure \ref{bipartitegraph}.
Entry $M_{i,j}$ in the matrix derived from $T$ enumerates dimer covers of $H$ from which outer vertices $...$ have been removed.
\end{theorem}

\begin{proof}\td{fix the statement}
\end{proof}

{\tiny$$\left(
\begin{array}{cccc}
 a & \frac{a b}{x}+\frac{d}{x} & \frac{a c h b^2}{e u x y}+\frac{a v b}{e x}+\frac{a c g b}{u x y}+\frac{a h b}{u x y}+\frac{c d h b}{e u x y}+\frac{d v}{e
   x}+\frac{c d g}{u x y}+\frac{a e g}{u x y}+\frac{d h}{u x y}+\frac{f}{z}+\frac{d e g}{u x y b}+\frac{d g}{z b} & \frac{a b c}{x y}+\frac{d c}{x y}+\frac{f h}{g
   w}+\frac{j}{w}+\frac{a e}{x y}+\frac{d e}{b x y}+\frac{d u}{b z}+\frac{f u}{g z} \\
 x & b & \frac{c h b^2}{e u y}+\frac{v b}{e}+\frac{c g b}{u y}+\frac{h b}{u y}+\frac{e g}{u y} & \frac{b c}{y}+\frac{e}{y} \\
 \frac{u y x}{b e}+\frac{c h x}{e v}+\frac{i x}{v}+\frac{z}{b} & \frac{b c h}{e v}+\frac{u y}{e}+\frac{b i}{v} & \frac{c^2 h^2 b^2}{e^2 u v y}+\frac{c h i b^2}{e u v
   y}+\frac{2 c h b}{e^2}+\frac{i b}{e}+\frac{c h^2 b}{e u v y}+\frac{c^2 g h b}{e u v y}+\frac{c g i b}{u v y}+\frac{h i b}{u v y}+\frac{c g}{e}+\frac{h}{e}+\frac{u
   v y}{e^2}+\frac{c g h}{u v y}+\frac{e g i}{u v y} & \frac{b h c^2}{e v y}+\frac{u c}{e}+\frac{h c}{v y}+\frac{b i c}{v y}+\frac{e i}{v y} \\
 \frac{w}{g}+\frac{x y}{b}+\frac{e z}{b u}+\frac{h z}{g u} & y & \frac{c g}{u}+\frac{v y}{e}+\frac{b c h}{e u} & c \\
\end{array}
\right)$$}
This is the matrix arising from the network of Figure \ref{4example3}.
\begin{figure}
\begin{center}\includegraphics[width=3in]{4example3}\end{center}
\caption{\label{4example3}Another network. }
\end{figure}
}

\bibliographystyle{alpha}
\bibliography{RP}

\end{document}